\theoremstyle{definition}
\newtheorem{dfn}{Definition}[section]
\newtheorem{asm}[dfn]{Assumption}
\newtheorem{exm}[dfn]{Example}
\newtheorem{rem}[dfn]{Remark}
\theoremstyle{plain}
\newtheorem{lem}[dfn]{Lemma}
\newtheorem{thm}[dfn]{Theorem}
\newtheorem{alg}[dfn]{Algorithm}
\newcommand{\BddMul}{w}
\newcommand{\R}{\mathbb{R}}
\newcommand{\N}{\mathbb{N}}
\newcommand{\Lag}{\mathcal{L}}
\newcommand{\wto}{\rightharpoonup}
\newcommand{\dist}[1]{\operatorname{dist}(#1)}
\newcommand{\dual}[1]{\mleft\langle #1 \mright\rangle}
\newcommand{\scal}[1]{\mleft( #1 \mright)}
\newcommand{\TngCone}[2]{\mathcal{T}_{#1}(#2)}
\newcommand{\NorCone}[2]{\mathcal{N}_{#1}(#2)}
\title{
    On Error Bounds and Multiplier Methods for Variational Problems in Banach Spaces%
    \thanks{This research was supported by the German Research Foundation (DFG) within the priority
    program ``Non-smooth and Complementarity-based Distributed Parameter Systems: Simulation and
    Hierarchical Optimization'' (SPP 1962) under grant number KA 1296/24-1.}
}
\date{March 27, 2018}
\author{
    Christian Kanzow$^{\dagger}$ \and Daniel Steck%
    \thanks{University of W\"urzburg, Institute of Mathematics, Campus Hubland Nord,
        Emil-Fischer-Str.\ 30, 97074 Würzburg, Germany;
        \{kanzow,daniel.steck\}@mathematik.uni-wuerzburg.de}
}
\begin{document}

\maketitle

{
\small\textbf{\abstractname.}
This paper deals with a general form of variational problems in Banach spaces which encompasses variational inequalities as well as minimization problems. We prove a characterization of local error bounds for the distance to the (primal-dual) solution set and give a sufficient condition for such an error bound to hold. In the second part of the paper, we consider an algorithm of augmented Lagrangian type for the solution of such variational problems. We give some global convergence properties of the method and then use the error bound theory to provide estimates for the rate of convergence and to deduce boundedness of the sequence of penalty parameters. Finally, numerical results for optimal control, Nash equilibrium problems, and elliptic parameter estimation problems are presented.
\par\addvspace{\baselineskip}
}

{
\small\textbf{Keywords.}
Variational problem, variational inequality, error bound, augmented Lagrangian method, local convergence, global convergence, Nash equilibrium problem.
\par\addvspace{\baselineskip}
}

{
\small\textbf{AMS subject classifications.}
49K, 49M, 65K, 90C.
\par\addvspace{\baselineskip}
}

\section{Introduction}\label{Sec:Intro}

This paper deals with the following variational problem:
\begin{equation}\label{Eq:VI}
    \text{Find }x\in M\text{ such that}\quad\dual{F(x),v}\ge 0
    \quad\forall v\in\TngCone{M}{x},
\end{equation}
where $M\subseteq X$ is a nonempty closed set, $X$ a real Banach space, and $F:X\to X^*$ a given mapping. The set $\TngCone{M}{x}$ denotes the (Bouligand) tangent cone \cite{Bonnans2000} to $M$ at $x$. If $M$ is additionally convex, then \eqref{Eq:VI} is equivalent to
\begin{equation}\label{Eq:VI_Convex}
    \text{Find }x\in M\text{ such that}\quad\dual{F(x),y-x}\ge 0
    \quad\forall y\in M,
\end{equation}
which is often regarded as the standard form of a variational inequality (VI). Throughout this paper, we will use the terms ``variational inequality'' and ``variational problem'' interchangeably, and often refer to \eqref{Eq:VI} as a VI. Note that, in the absence of convexity, \eqref{Eq:VI} is the canonical formulation of variational problems; in particular, this form encompasses first-order necessary conditions for nonlinear optimization problems of the type
\begin{equation}\label{Eq:Opt}
    \min\ f(x) \quad\text{s.t.}\quad x\in M
\end{equation}
by choosing $F:=f'$. Throughout this paper, we assume that $M$ is given in the form
\begin{equation}\label{Eq:M}
    M=\{ x\in X: g(x)\in K \},
\end{equation}
where $g:X\to H$ is a given mapping, $H$ a real Hilbert space, and $K\subseteq H$ a nonempty closed convex set (not necessarily a cone). We make no blanket convexity assumptions on $g$ (although some of our results do pertain to the convex case). Hence, the set $M$ is nonconvex in general, and \eqref{Eq:VI} is the natural framework for our setting.

Variational inequalities are a well-known and popular class in both finite and infinite-dimensional optimization since they unify various problem types such as constrained minimization and equilibrium-type problems, in particular Nash and (certain) generalized Nash equilibrium problems \cite{Facchinei2007,Facchinei2010,Fischer2014,Hintermueller2015,Kanzow2017a}. This opens up a broad spectrum of applications including optimal control, parameter estimation, differential games, and problems in mechanics or shape optimization. Many further applications are given in \cite{Baiocchi1984,Glowinski2015,Glowinski1981,Kinderlehrer2000}. As a result, VIs have gained considerable attention in the literature and a variety of algorithms have been developed for their solution, e.g.\ \cite{Facchinei2003,Fortin1983,Glowinski2008,Ulbrich2011}.

On the other hand, the augmented Lagrangian method (ALM, also called multiplier-penalty method or simply multiplier method) is one of the classical methods for nonlinear optimization, see \cite{Conn1991,Hestenes1969,Powell1969,Rockafellar1973,Rockafellar1974} and the textbooks \cite{Bertsekas1982,Nocedal2006}. In recent years, ALMs have seen a certain resurgence \cite{Andreani2007,Andreani2008,Birgin2012,Birgin2010,Birgin2014} in the form of modified methods which use a slightly different update of the Lagrange multiplier and turn out to have very strong global convergence properties \cite{Birgin2014}. A comparison of the classical and modified ALMs is given in \cite{Kanzow2017}. We also note that ALMs have been generalized to VIs in finite dimensions \cite{Andreani2008} and to infinite-dimensional optimization problems in certain restricted settings \cite{Hintermueller2006,Ito1990a,Ito1990b,Ito2000,Ito2008,Kanzow2016,Wierzbicki1977}. However, most of these papers either consider rather specific problem settings \cite{Hintermueller2006,Ito1990a,Ito1990b,Ito2000,Ito2008} or deal with global convergence properties only \cite{Kanzow2016}.

The main purpose of the present paper is to analyze the local convergence properties of ALMs for variational inequalities in the general (possibly infinite-dimensional) setting \eqref{Eq:VI}. To accomplish this, we will need certain elements of perturbation and error bound theory for generalized equations and KKT systems, some of which are refinements of the corresponding results in finite dimensions \cite{Ding2017,Dontchev1998,Fischer2002,Izmailov2012a}. Using these, we will prove that, given a KKT point which admits a primal-dual error bound, the ALM converges locally to this point with a rate of convergence that is essentially $1/\rho_k$ (where $\rho_k$ is the penalty parameter), and that $\{\rho_k\}$ remains bounded if updated suitably.

Sufficient conditions for the primal-dual error bound include a suitable second-order sufficient condition (SOSC) together with a strict version of the Robinson constraint qualification (see Section \ref{Sec:Prelims}). These assumptions are akin to those used in \cite{Birgin2012} for ALMs in finite-dimensional nonlinear programming (NLP), where the authors obtain results similar to ours. Interestingly, however, it turns out that these results (for standard NLP) can be established under SOSC only \cite{Fernandez2012} by using the specific structure of the constraints. In particular, when transferred to our notation, the set $K$ arising from NLP is polyhedral and this yields, roughly speaking, the dual part of the error bound without any constraint qualification \cite{Fernandez2012,Izmailov2012a}. However, apart from the NLP setting, polyhedrality is a rare property which is usually violated, e.g.\ in optimal control or semidefinite programming. As a result, SOSC alone does not yield a primal-dual error bound, see the example in Section \ref{Sec:ErrorBounds}. We solve this issue by using SOSC together with a suitable constraint qualification.

The paper is organized as follows. We start with some preliminary material in Section \ref{Sec:Prelims} and give some results on primal-dual error bounds in Section \ref{Sec:ErrorBounds}. Section \ref{Sec:Method} contains a precise statement of our algorithm and we continue with some global convergence results in Section \ref{Sec:GlobalConv}. In Section \ref{Sec:LocalConv}, we prove the main results of this paper, i.e.\ local convergence of the ALM under the error bound hypothesis. We then give some numerical results in Section \ref{Sec:Applic} and final remarks in Section \ref{Sec:Final}.

\textbf{Notation:} Throughout the paper, $X$ is always a real Banach space, $H$ a real Hilbert space, and their duals are denoted by $X^*$ and $H^*$, the latter of which we usually identify with $H$. Fr\'echet-derivatives are denoted by a prime $'$ or by $D_x$ if the variable is emphasized, and we use the abbreviation lsc for lower semicontinuity. Strong and weak convergence are denoted by $\to$ and $\wto$, respectively. Duality pairings are written as $\dual{\cdot,\cdot}$, scalar products as $\scal{\cdot,\cdot}$, and norms are denoted by $\|\cdot\|$ with an appropriate subscript to emphasize the corresponding space (e.g.\ $\|\cdot\|_X$). If $S$ is a nonempty subset of some normed space, we write $d_S=\dist{\cdot,S}$ for the distance to $S$. Additionally, if $S\subseteq H$ is closed and convex, we write $P_S$ for the projection onto $S$.

\section{Preliminaries}\label{Sec:Prelims}

This section is dedicated to establishing some preliminary results as well as fixing the setting we will consider later. Recall that the set $M$ is given by the formula \eqref{Eq:M} with a nonempty closed convex set $K\subseteq H$.

\subsection{Cones and Convexity}

If $S$ is a nonempty closed subset of some space $Z$, then $S^{\circ}:=\{ \psi\in Z^*: \dual{\psi,s}\le 0~\forall s\in S \}$ denotes the \emph{polar cone} of $S$. If $Z$ is a Hilbert space, we of course treat $S^{\circ}$ as a subset of $Z$. Moreover, if $x\in S$ is a given point, we denote by
\begin{equation*}
    \TngCone{S}{x}:= \bigl\{ d\in Z : \exists x^k\to x,\, t_k\downarrow 0
    \text{ such that } x^k\in S \text{ and } (x^k-x)/t_k \to d \bigr\}
\end{equation*}
the \emph{tangent cone} of $S$ at $x$. If $S$ is additionally convex, we also define the \emph{normal cone}
\begin{equation*}
    \NorCone{S}{x}:= \mleft\{ \psi\in Z^*: \dual{\psi,y-x}\le 0~\forall y\in S \mright\}
    =\TngCone{S}{x}^{\circ}.
\end{equation*}
If $x\notin S$, we define $\TngCone{S}{x}$ and $\NorCone{S}{x}$ to be empty. Note that, if $S$ is a convex set, then $\TngCone{S}{x}$ and $\NorCone{S}{x}$ are closed convex cones for all $x\in S$.

Recall that the constraint system of the VI is given by $g(x)\in K$ with $K\subseteq H$ a nonempty closed convex set. A natural question is what the appropriate notion of convexity is in this general setting. In particular, we would like to give sufficient conditions for the convexity of the feasible set $M$. To this end, consider the recession cone
\begin{equation}\label{Eq:RecessionCone}
    K_{\infty}:=\{ y\in H: y+K\subseteq K \}.
\end{equation}
It is well-known that $K_{\infty}$ is a nonempty closed convex cone \cite{Bauschke2011,Bonnans2000}. If $K$ itself is a cone, then $K_{\infty}=K$. We associate with $K$ (and $K_{\infty}$) the (partial) order relation
\begin{equation}\label{Eq:OrderK}
    y \le_K z :\Longleftrightarrow z-y\in K_{\infty}.
\end{equation}
Note that we use the notation $\le_K$ for the sake of convenience, even though the order is actually induced by the cone $K_{\infty}$. We also note that $K_{\infty}$ may not be pointed (that is, $K_{\infty}\cap (-K_{\infty})$ may contain a nonzero element) and, hence, the relation $\le_K$ does not necessarily satisfy the antisymmetry property
\begin{equation*}
    a\le_K b \wedge b\le_K a \implies a=b.
\end{equation*}
In the terminology of order theory, this makes $\le_K$ a so-called preorder. We will simply call it an order relation due to the descriptiveness of the term. Note also that, throughout this paper, the symbol $\le$ without any index is always the standard ordering in $\R$.

The order relation \eqref{Eq:OrderK} allows us to extend various familiar concepts from finite-dimensional optimization to our setting. For instance, we say that $g$ is convex if
\begin{equation*}
    g(\alpha x+(1-\alpha)y) \le_K \alpha g(x) + (1-\alpha) g(y)
\end{equation*}
holds for all $x,y\in X$ and $\alpha\in[0,1]$. Other notions which involve an order such as increasing, decreasing or concave functions are also defined in a straightforward way. For example, the distance function $d_K:H\to\R$ is decreasing since $z\ge_K y$ implies $z=y+k$, $k\in K_{\infty}$, and
\begin{equation*}
    d_K(z)=d_K(y+k)\le \|y+k-(P_K(y)+k)\| = \|y-P_K(y)\| = d_K(y),
\end{equation*}
where the inequality uses the fact that $P_K(y)+k\in K$ by definition of $K_{\infty}$. Some other results pertaining to convexity, concavity, etc.\ are given in the following lemma. Note that, in the context of our constraint set \eqref{Eq:M} with $g(x)\in K$, it is more natural to consider concavity of $g$ with respect to the ordering \eqref{Eq:OrderK} as opposed to convexity.

\begin{lem}\label{Lem:GeneralizedConvexity}
    Assume that $g:X\to H$ is concave. If $m:H\to\R$ is convex and decreasing, then $m\circ g$ is convex. In particular:
    \begin{enumerate}[label=\textnormal{(\alph*)}]
        \item The function $d_K\circ g:X\to\R$ is convex.
        \item If $\lambda\in K_{\infty}^{\circ}$, then $x\mapsto\scal{\lambda,g(x)}$ is convex.
        \item The set $M=\{x\in X: g(x)\in K\}$ is convex.
    \end{enumerate}
\end{lem}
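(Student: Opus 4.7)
The main claim reduces to two chained inequalities. Given $x,y\in X$ and $\alpha\in[0,1]$, concavity of $g$ yields
\[
    g(\alpha x + (1-\alpha)y) \ge_K \alpha g(x) + (1-\alpha)g(y),
\]
and since $m$ is decreasing with respect to $\le_K$, this reverses to
\[
    m\bigl(g(\alpha x + (1-\alpha)y)\bigr) \le m\bigl(\alpha g(x) + (1-\alpha)g(y)\bigr).
\]
A subsequent application of the (ordinary) convexity of $m$ on the right-hand side gives the desired inequality for $m\circ g$. So the main statement is essentially a two-line unfolding of the definitions.

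For the three corollaries, the plan is to exhibit in each case a function $m$ that is convex and decreasing, and then invoke the main statement. For (a), $m=d_K$ is convex on $H$ (a standard consequence of $K$ being convex, via nonexpansiveness of the projection $P_K$ and the triangle inequality) and was shown just above the lemma to be decreasing with respect to $\le_K$. For (b), $m(z)=(\lambda,z)$ is linear and hence convex; it is decreasing because $z\ge_K y$ means $z-y\in K_\infty$, and $\lambda\in K_\infty^{\circ}$ then forces $(\lambda,z-y)\le 0$. For (c), I would observe that $M=\{x\in X:(d_K\circ g)(x)\le 0\}$ and then use convexity of $d_K\circ g$ from (a): the sublevel set of a convex real-valued function is convex.

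There is no real obstacle here; the only small point worth being careful about is the verification that $m=(\lambda,\cdot)$ is decreasing, which is immediate from the polarity relation between $K_\infty$ and $K_\infty^{\circ}$, and the recognition in (c) that feasibility $g(x)\in K$ is equivalent to $d_K(g(x))=0$, i.e.\ to $d_K(g(x))\le 0$ since distance functions are nonnegative. Both the main statement and the three consequences should fit comfortably in a short proof that essentially strings together definitions, the decreasing property of $d_K$ already established in the text, and elementary facts about sublevel sets of convex functions.
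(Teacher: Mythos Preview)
Your proposal is correct and follows essentially the same approach as the paper: the main statement is proved by chaining the concavity inequality for $g$ with the monotonicity and then the convexity of $m$, and parts (a)--(c) are obtained exactly as you describe, by taking $m=d_K$, $m=\scal{\lambda,\cdot}$, and by writing $M$ as the sublevel set $\{x: d_K(g(x))\le 0\}$. The only cosmetic difference is that the paper cites a reference for the convexity of $d_K$ rather than sketching it via nonexpansiveness.
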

\begin{proof}
    Let $x,y\in X$ and $x_{\alpha}=\alpha x+(1-\alpha) y$, $\alpha\in[0,1]$. Then $g(x_{\alpha})\ge_K \alpha g(x)+(1-\alpha) g(y)$ by the concavity of $g$. Applying $m$ on both sides yields
    \begin{equation*}
        m(g(x_{\alpha}))\le m(\alpha g(x)+(1-\alpha)g(y))\le
        \alpha m(g(x))+(1-\alpha)m(g(y)),
    \end{equation*}
    where we used the monotonicity and the convexity of $m$. Hence, $m\circ g$ is convex. Assertion (a) now follows because $d_K$ is decreasing (see above) and convex \cite[Cor.\ 12.12]{Bauschke2011}. Similarly, for (b), the function $y\mapsto\scal{\lambda,y}$ with $\lambda\in K_{\infty}^{\circ}$ is obviously a convex function, and it is decreasing because $\scal{\lambda,k}\le 0$ for all $k\in K_{\infty}$. Finally, for (c), note that
    \begin{equation*}
        M=\{ x\in X: g(x)\in K \}=\{ x\in X: d_K(g(x))\le 0 \}.
    \end{equation*}
    Hence, $M$ is a lower level set of the convex function $d_K\circ g$ and therefore a convex set.
\end{proof}

\noindent
Note that the extreme case $K_{\infty}=\{0\}$ can occur, e.g.\ if $K$ is bounded. In this case, monotonicity becomes trivial and convexity and concavity reduce to linearity.

It is possible to characterize $K_{\infty}^{\circ}$ by means of the so-called barrier cone to $K$, see \cite{Bauschke2011}. Here, we will only need the following observation.

\begin{lem}\label{Lem:RecessionConePolar}
    If $y\in H$, then $y-P_K(y)\in K_{\infty}^{\circ}$.
\end{lem}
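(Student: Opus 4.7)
The plan is to use the variational characterization of the projection onto the closed convex set $K$. Recall that $P_K(y)$ is uniquely determined by the inequality
\begin{equation*}
    \scal{y-P_K(y), k-P_K(y)} \le 0 \quad \forall k\in K.
\end{equation*}
My strategy is to fix an arbitrary $z \in K_{\infty}$ and plug into this inequality a well-chosen point $k\in K$ built from $z$ and $P_K(y)$, then extract the desired sign condition $\scal{y-P_K(y), z}\le 0$.

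The key observation is that the cone $K_{\infty}$ lets us translate $P_K(y)$ along $z$ (and arbitrarily positive multiples of $z$) while remaining in $K$. Concretely, since $K_{\infty}$ is a cone, $tz \in K_{\infty}$ for every $t\ge 0$, and by the very definition \eqref{Eq:RecessionCone} of $K_{\infty}$ we have $P_K(y)+tz \in K$ for all $t\ge 0$ (using that $P_K(y)\in K$).

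With this feasible point in hand, I would substitute $k := P_K(y)+tz$ into the projection inequality, which gives $t\scal{y-P_K(y), z}\le 0$ for every $t>0$. Dividing by $t$ yields $\scal{y-P_K(y), z}\le 0$, and since $z\in K_{\infty}$ was arbitrary this establishes $y-P_K(y)\in K_{\infty}^{\circ}$, as required.

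There is no real obstacle here: the only subtlety is recognizing that one should exploit the fact that $K_{\infty}$ is a cone (so that a one-parameter family of feasible perturbations of $P_K(y)$ is available), which after plugging into the projection inequality immediately removes the nuisance term $\scal{y-P_K(y),-P_K(y)}$ and isolates $\scal{y-P_K(y), z}$.
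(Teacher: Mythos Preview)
Your proof is correct and follows essentially the same approach as the paper: both use the cone property of $K_{\infty}$ to generate a ray $P_K(y)+tz$ (the paper writes $k+\alpha z$ for a fixed $k\in K$) inside $K$, apply the projection inequality, and let the scale parameter force $\scal{y-P_K(y),z}\le 0$. Your choice $k=P_K(y)$ makes the cancellation slightly more explicit than the paper's contradiction argument, but the idea is identical.
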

\begin{proof}
    Let $k\in K$ be fixed and let $z\in K_{\infty}$, $\alpha\ge 0$. Then $\alpha z\in K_{\infty}$ and therefore $\alpha z+k\in K$. A standard projection inequality yields $\scal{y-P_K(y),\alpha z+k-P_K(y)}\le 0$. But this clearly cannot hold for all $\alpha$ if $\scal{y-P_K(y),z}>0$. Hence, $\scal{y-P_K(y),z}\le 0$.
\end{proof}

\subsection{The KKT Conditions}

We now turn to the variational inequality \eqref{Eq:VI} and discuss its KKT conditions. Starting with this section, we assume that the mapping $F$ is continuously differentiable and that $g$ is twice continuously differentiable. Consider now the Lagrange function
\begin{equation}\label{Eq:L}
    \Lag: X\times H\to X^*, \quad \Lag (x,\lambda):=F(x)+g'(x)^* \lambda.
\end{equation}
Note that, if the VI originates from a minimization problem, then $\Lag$ is actually the derivative of the conventional Lagrange function. The following are the standard first-order conditions which we will use throughout this paper.

\begin{dfn}\label{Dfn:KKT}
    A tuple $(\bar{x},\bar{\lambda})\in X\times H$ is a \emph{KKT point} of \eqref{Eq:VI}, \eqref{Eq:M} if
    \begin{equation}\label{Eq:KKT}
        \Lag(\bar{x},\bar{\lambda})=0 \quad\text{and}\quad
        \bar{\lambda}\in\NorCone{K}{g(\bar{x})}.
    \end{equation}
    We call $\bar{x}\in X$ a stationary point if $(\bar{x},\bar{\lambda})$ is a KKT point for some $\bar{\lambda}\in H$, and denote by $\mathcal{M}(\bar{x})$ the corresponding set of multipliers.
\end{dfn}

\noindent
Note that $\bar{\lambda}\in\NorCone{K}{g(\bar{x})}$ implies $g(\bar{x})\in K$, since otherwise the normal cone would be empty. Moreover, we remark that, if $K$ is a cone, then $\bar{\lambda}\in\NorCone{K}{g(\bar{x})}$ is equivalent to the complementarity conditions $g(\bar{x})\in K$, $\bar{\lambda}\in K^{\circ}$, and $\scal{\bar{\lambda},g(\bar{x})}=0$, see \cite[Ex.\ 2.62]{Bonnans2000}.

The relationship between the VI and its KKT conditions is given as follows: if $\bar{x}$ solves the VI and a suitable constraint qualification holds in $\bar{x}$, then there exists a multiplier $\bar{\lambda}$ such that $(\bar{x},\bar{\lambda})$ is a KKT point \cite[Remark 5.8]{Bonnans2000}. On the other hand, it is easy to see that the KKT conditions are always sufficient for the VI \eqref{Eq:VI}, even if $M$ is nonconvex. This result is contained in the following theorem and crucially depends on the fact that the VI uses the tangent cone $\mathcal{T}_M$ and not $M$ itself.

\begin{thm}\label{Thm:SufficiencyKKT}
    If $(\bar{x},\bar{\lambda})$ is a KKT point of the VI, then $\bar{x}$ is a solution of the VI.
\end{thm}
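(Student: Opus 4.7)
The plan is to verify that $\bar{x}$ lies in $M$ and then, for an arbitrary tangent direction $v\in\TngCone{M}{\bar{x}}$, to use the two KKT equalities to bound $\dual{F(\bar{x}),v}$ from below by zero. The first part is immediate: since $\bar{\lambda}\in\NorCone{K}{g(\bar{x})}$, the normal cone is nonempty at $g(\bar{x})$, which by definition forces $g(\bar{x})\in K$, i.e.\ $\bar{x}\in M$.

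The main step is a ``chain rule for tangent cones'' argument. Given $v\in\TngCone{M}{\bar{x}}$, I would select sequences $x^k\in M$ with $x^k\to\bar{x}$ and $t_k\downarrow 0$ such that $(x^k-\bar{x})/t_k\to v$. Using the Fr\'echet differentiability of $g$ at $\bar{x}$, the expansion
\begin{equation*}
    \frac{g(x^k)-g(\bar{x})}{t_k}= g'(\bar{x})\frac{x^k-\bar{x}}{t_k}+\frac{o(\|x^k-\bar{x}\|_X)}{t_k}
\end{equation*}
shows that the left-hand side converges to $g'(\bar{x})v$, because the remainder term equals $\frac{o(\|x^k-\bar{x}\|_X)}{\|x^k-\bar{x}\|_X}\cdot\frac{\|x^k-\bar{x}\|_X}{t_k}$ with the first factor tending to zero and the second factor bounded (it converges to $\|v\|_X$). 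Since $g(x^k)\in K$ and $g(\bar{x})\in K$, this gives $g'(\bar{x})v\in\TngCone{K}{g(\bar{x})}$.

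To finish, observe that $K$ is convex, so $\NorCone{K}{g(\bar{x})}=\TngCone{K}{g(\bar{x})}^\circ$ (by the definition recalled in Section~\ref{Sec:Prelims}); hence $\dual{\bar{\lambda},g'(\bar{x})v}\le 0$. Combining this with the stationarity relation $F(\bar{x})=-g'(\bar{x})^*\bar{\lambda}$ yields
\begin{equation*}
    \dual{F(\bar{x}),v}= -\dual{g'(\bar{x})^*\bar{\lambda},v} = -\dual{\bar{\lambda},g'(\bar{x})v} \ge 0,
\end{equation*}
which is precisely the VI inequality along the arbitrary direction $v\in\TngCone{M}{\bar{x}}$.

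The only place that really requires care is the claim $g'(\bar{x})v\in\TngCone{K}{g(\bar{x})}$; the rest is a direct manipulation of the KKT identities and the polarity between tangent and normal cones. No constraint qualification is needed, which is exactly the point the authors emphasise before the theorem, and this matches the remark that the VI is formulated using the tangent cone $\TngCone{M}{\bar{x}}$ rather than the set $M$ itself.
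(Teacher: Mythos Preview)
Your proof is correct and follows essentially the same approach as the paper: both use the tangent-cone sequence $(x^k-\bar{x})/t_k\to v$, the first-order expansion of $g$, and the normal-cone inequality $\scal{\bar{\lambda},g(x^k)-g(\bar{x})}\le 0$. The only cosmetic difference is that you first pass to the limit to record $g'(\bar{x})v\in\TngCone{K}{g(\bar{x})}$ and then invoke polarity, whereas the paper applies $\bar{\lambda}\in\NorCone{K}{g(\bar{x})}$ termwise and takes the limit afterwards.
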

\begin{proof}
    Let $(\bar{x},\bar{\lambda})$ be a KKT point and $d\in\TngCone{M}{\bar{x}}$. Then $d=\lim_{k\to\infty}(x^k-\bar{x})/t_k$ with $\{x^k\}\subseteq M$, $x^k\to\bar{x}$, and $t_k\downarrow 0$. Hence,
    \begin{equation*}
        \dual{F(\bar{x}),d}=\dual{-g'(\bar{x})^* \bar{\lambda},
            \lim_{k\to\infty} \frac{x^k-\bar{x}}{t_k}}=
        -\lim_{k\to\infty} \frac{1}{t_k} \scal{\bar{\lambda},g'(\bar{x})(x^k-\bar{x})}.
    \end{equation*}
    But $g'(\bar{x})(x^k-\bar{x})=g(x^k)-g(\bar{x})+o(t_k)$ and therefore
    \begin{equation*}
        \dual{F(\bar{x}),d}=-\lim_{k\to\infty}\frac{1}{t_k}
        \scal{\bar{\lambda},g(x^k)-g(\bar{x})}\ge 0,
    \end{equation*}
    where we used $\bar{\lambda}\in \NorCone{K}{g(\bar{x})}$ and $g(x^k)\in K$ for all $k$.
\end{proof}

\noindent
For a given KKT point $(\bar{x},\bar{\lambda})$ and $\eta\ge 0$, we define the extended critical cone
\begin{equation*}
    C_{\eta}(\bar{x}):=
    \bigl\{ d\in X: \dual{F(\bar{x}),d}\le \eta\|d\|_X,~
    g'(\bar{x})d\in\TngCone{K}{g(\bar{x})} \bigr\}.
\end{equation*}
The following is the second-order condition which we will use throughout this paper.

\begin{dfn}\label{Dfn:SOSC}
    Let $(\bar{x},\bar{\lambda})$ be a KKT point of the VI. We say that the \emph{second-order sufficient condition (SOSC)} holds in $(\bar{x},\bar{\lambda})$ if there are $\eta,c>0$ such that
    \begin{equation*}
        \dual{D_x \Lag(\bar{x},\bar{\lambda})d,d}\ge c\|d\|_X^2
        \quad\text{for all }d\in C_{\eta}(\bar{x}).
    \end{equation*}
\end{dfn}

\noindent
Note that we use the terminology ``second-order sufficient condition'' mainly for the sake of consistency with a similar condition from nonlinear optimization, e.g.\ \cite[Def.\ 3.60]{Bonnans2000}. For variational problems such as \eqref{Eq:VI}, there is actually no need for sufficiency conditions to complement the KKT system because the latter always implies that $\bar{x}$ is a solution of the VI (see Theorem \ref{Thm:SufficiencyKKT}).

Let us also note that Definition \ref{Dfn:SOSC} is slightly different from the second-order sufficient condition for nonlinear optimization \cite[Def.\ 3.60]{Bonnans2000} because our extended critical cone is slightly smaller. However, under the Robinson constraint qualification (see below and \cite[Def.\ 2.86]{Bonnans2000}), the corresponding second-order conditions coincide \cite[Remark 3.68]{Bonnans2000}. Moreover, and more importantly, our subsequent analysis will be based on \cite[Thm.\ 5.9]{Bonnans2000} which directly uses the ``smaller'' critical cone together with the following condition.

\begin{dfn}\label{Dfn:SRC}
    Let $(\bar{x},\bar{\lambda})$ be a KKT point of the VI, and $K_0:=\bigl\{ y\in K: \scal{\bar{\lambda},y-g(\bar{x})}=0 \bigr\}$. We say that the \emph{strict Robinson condition (SRC)} holds in $(\bar{x},\bar{\lambda})$ if
    \begin{equation}\label{Eq:SRC}
        0\in \operatorname{int}\bigl(g(\bar{x})+g'(\bar{x})X-K_0\bigr).
    \end{equation}
\end{dfn}

\noindent
Note that the standard Robinson constraint qualification arises if we replace $K_0$ in \eqref{Eq:SRC} by the larger set $K$. Hence, SRC is stronger than the Robinson constraint qualification and the equivalent regularity condition of Zowe and Kurcyusz \cite{Zowe1979}. On the other hand, SRC implies the uniqueness of $\bar{\lambda}$ and is weaker than the surjectivity of $g'(\bar{x})$, which is a typical regularity assumption for infinite-dimensional problems.

It should be noted that the definition of SRC presupposes the existence of $\bar{\lambda}$ and therefore depends not only on the constraints but also on the function $F$. Hence, we refrain from calling \eqref{Eq:SRC} a constraint qualification (in contrast to \cite{Bonnans2000}, where SRC is called the \emph{strict constraint qualification}). A similar condition which is occasionally used in the finite-dimensional literature is the strict Mangasarian-Fromovitz condition \cite{Birgin2012,Floudas2001,Kyparisis1985}. This condition turns out to be a special case of SRC \cite[Remark 4.49]{Bonnans2000} and is also not a constraint qualification \cite{Wachsmuth2013}.

\section{Error Bounds for the Variational Problem}\label{Sec:ErrorBounds}

Recall that the KKT conditions of the VI are given by
\begin{equation*}
    \Lag(\bar{x},\bar{\lambda})=0 \quad\text{and}\quad
    \bar{\lambda}\in\NorCone{K}{g(\bar{x})},
\end{equation*}
where $(\bar{x},\bar{\lambda})\in X\times H$. The last condition is well-known \cite[Prop.\ 6.46]{Bauschke2011} to be equivalent to $g(\bar{x})=P_K(g(\bar{x})+\bar{\lambda})$. This suggests defining the residual mapping
\begin{equation}\label{Eq:KKTResidual}
    \sigma(x,\lambda):=\|\Lag(x,\lambda)\|_{X^*}
    +\|g(x)-P_K(g(x)+\lambda)\|_H.
\end{equation}
Clearly, the KKT conditions of the VI are equivalent to $\sigma(\bar{x},\bar{\lambda})=0$. We will use this relationship to construct suitable error bounds for the primal-dual variables.

In order to establish the error bound we are looking for, we first need a characterization of local error bounds in terms of a local upper Lipschitz property (or calmness) of the KKT system. This result has appeared in various forms in the literature \cite{Ding2017,Fischer2002,Izmailov2012a}, albeit mostly in a finite-dimensional setting. In our notation, it involves certain perturbations of the KKT system \eqref{Eq:KKT} with a parameter pair $p=(\alpha,\beta)\in X^*\times H$. Without loss of generality, we equip this product space with the norm $\|(\alpha,\beta)\|_{X^*\times H}:=\|\alpha\|_{X^*}+\|\beta\|_H$. Recall also that $\mathcal{M}(\bar{x})$ denotes the set of Lagrange multipliers corresponding to $\bar{x}$.

\begin{thm}\label{Thm:ErrorBoundEquivalence}
    Let $(\bar{x},\bar{\lambda})\in X\times H$ be a KKT point of the VI. Then the following assertions are equivalent:
    \begin{enumerate}[label=\textnormal{(\alph*)}]
        \item There are a neighborhood $U$ of $\bar{x}$ and $c>0$ such that, for all $p=(\alpha,\beta)\in X^*\times H$ close to $(0,0)$, any solution $(x_p,\lambda_p)\in U\times H$ of the perturbed KKT system
        \begin{equation}\label{Eq:PerturbedKKT}
            \Lag(x,\lambda)=\alpha, \quad \lambda\in \NorCone{K}{g(x)-\beta}
        \end{equation}
        satisfies the estimate $\|x_p-\bar{x}\|_X+\dist{\lambda_p,\mathcal{M}(\bar{x})}
        \le c\|p\|_{X^*\times H}$.
        \item There are a neighborhood $U$ of $\bar{x}$ and $c>0$ such that, for all $(x,\lambda)\in U\times H$ with $\sigma(x,\lambda)$ sufficiently small,
        \begin{equation*}
            \|x-\bar{x}\|_X+\dist{\lambda,\mathcal{M}(\bar{x})}
            \le c \sigma(x,\lambda).
        \end{equation*}
    \end{enumerate}
\end{thm}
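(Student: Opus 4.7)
\medskip
\noindent
\textbf{Plan of proof.} The two statements describe the same calmness property from two angles: (a) fixes a perturbed KKT system and asks for Lipschitz behavior of its solutions, while (b) measures, via $\sigma$, how far a generic pair $(x,\lambda)$ is from satisfying the unperturbed system. My plan is to translate between the two by exhibiting, for every $(x,\lambda)$, a nearby exact solution of a perturbed KKT system with control on the perturbation, and conversely by estimating $\sigma$ of any solution of \eqref{Eq:PerturbedKKT} in terms of $\|p\|$. The key algebraic observation, used in both directions, is the identity
\begin{equation*}
    \lambda \in \NorCone{K}{g(x)-\beta}
    \iff g(x)-\beta = P_K\bigl(g(x)-\beta+\lambda\bigr),
\end{equation*}
together with firm nonexpansiveness of $P_K$.

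For the direction (b)$\Rightarrow$(a), which is the easier half, let $(x_p,\lambda_p)\in U\times H$ solve \eqref{Eq:PerturbedKKT}. The first equation gives $\|\Lag(x_p,\lambda_p)\|_{X^*}=\|\alpha\|_{X^*}$ at once. For the projection term in $\sigma(x_p,\lambda_p)$, I rewrite the normal cone condition as $g(x_p)-\beta=P_K(g(x_p)-\beta+\lambda_p)$ and apply the triangle inequality together with $1$-Lipschitz continuity of $P_K$, yielding
\begin{equation*}
    \|g(x_p)-P_K(g(x_p)+\lambda_p)\|_H
    \le \|\beta\|_H + \|P_K(g(x_p)-\beta+\lambda_p)-P_K(g(x_p)+\lambda_p)\|_H
    \le 2\|\beta\|_H.
\end{equation*}
Hence $\sigma(x_p,\lambda_p)\le \|\alpha\|_{X^*}+2\|\beta\|_H\le 2\|p\|_{X^*\times H}$, which (for $\|p\|$ small enough to meet the smallness requirement in (b)) produces the desired bound through (b).

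For (a)$\Rightarrow$(b), given $(x,\lambda)\in U\times H$ with $\sigma(x,\lambda)$ small, the obstacle is that $(x,\lambda)$ itself need not solve any perturbed system of the form \eqref{Eq:PerturbedKKT}: setting $\beta:=g(x)-P_K(g(x)+\lambda)$ places $\lambda+\beta$, not $\lambda$, into $\NorCone{K}{g(x)-\beta}$, by the standard projection identity. My workaround is therefore to pass to the corrected multiplier $\tilde\lambda:=\lambda+\beta$. Then $\tilde\lambda\in\NorCone{K}{g(x)-\beta}$ by construction, and $\Lag(x,\tilde\lambda)=\Lag(x,\lambda)+g'(x)^*\beta$, so $(x,\tilde\lambda)$ solves \eqref{Eq:PerturbedKKT} with parameter
\begin{equation*}
    \alpha:=\Lag(x,\lambda)+g'(x)^*\beta,\qquad
    \beta:=g(x)-P_K(g(x)+\lambda).
\end{equation*}
Using the local boundedness of $g'$ near $\bar x$, both $\|\alpha\|_{X^*}$ and $\|\beta\|_H$ are $O(\sigma(x,\lambda))$, so (a) delivers $\|x-\bar x\|_X+\dist{\tilde\lambda,\mathcal{M}(\bar x)}\le c'\sigma(x,\lambda)$ for a new constant $c'$. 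A final triangle inequality, exploiting $\|\tilde\lambda-\lambda\|_H=\|\beta\|_H\le\sigma(x,\lambda)$, converts this into the bound for $\dist{\lambda,\mathcal{M}(\bar x)}$ claimed in (b).

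The main obstacle is precisely the mismatch described above: a naive perturbation $\beta$ shifts the multiplier by $\beta$, so the proof cannot just reinterpret $(x,\lambda)$ as a solution of \eqref{Eq:PerturbedKKT}. The rest of the argument is routine bookkeeping, provided the neighborhoods and smallness thresholds from (a) and (b) are shrunk consistently so that both $\|p\|_{X^*\times H}$ and $\sigma$ stay within the respective admissible ranges.
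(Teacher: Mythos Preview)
Your proposal is correct and follows essentially the same route as the paper's proof: for (a)$\Rightarrow$(b) you introduce the corrected multiplier $\tilde\lambda=\lambda+\beta$ with $\beta=g(x)-P_K(g(x)+\lambda)$, which is exactly the paper's $\hat\lambda=g(x)+\lambda-P_K(g(x)+\lambda)$, and then bound the perturbation via local boundedness of $g'(x)^*$. The only cosmetic difference is in (b)$\Rightarrow$(a), where the paper invokes nonexpansiveness of $y\mapsto y-P_K(y+\lambda_p)$ to obtain $\sigma(x_p,\lambda_p)\le\|p\|_{X^*\times H}$ with constant $1$, whereas your triangle-inequality argument gives the harmless constant~$2$.
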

\begin{proof}
    (b)$\Rightarrow$(a): Let $p=(\alpha,\beta)\in X^*\times H$. It is an easy consequence of \cite[Cor.\ 4.10]{Bauschke2011} that the mapping $y\mapsto y-P_K(y+\lambda_p)$ is nonexpansive. Hence, we obtain the inequality
    \begin{equation*}
        \|g(x_p)-P_K(g(x_p)+\lambda_p)\|_H
        \le \|\beta\|_H + \|g(x_p)-\beta-P_K(g(x_p)
        -\beta+\lambda_p)\|_H.
    \end{equation*}
    Since $\lambda_p\in\NorCone{K}{g(x_p)-\beta}$, the last term is equal to zero \cite[Prop.\ 6.46]{Bauschke2011} and we obtain $\sigma(x_p,\lambda_p)\le\|\alpha\|_{X^*}+\|\beta\|_H=\|p\|_{X^*\times H}$. Choosing $p=(\alpha,\beta)$ sufficiently close to $0$, we see that $\sigma(x_p,\lambda_p)$ becomes arbitrarily small. Hence, we can apply (b) and obtain
    \begin{equation*}
        \|x_p-\bar{x}\|_X+\dist{\lambda_p,\mathcal{M}(\bar{x})}
        \le c\sigma(x_p,\lambda_p)\le c\|p\|_{X^*\times H}.
    \end{equation*}
    
    \noindent
    (a)$\Rightarrow$(b): Shrinking $U$ if necessary, we may assume that $\|g'(x)^*\|_{\mathcal{L}(H,X^*)}\le c_1$ for all $x\in U$ with some constant $c_1\ge 0$. Let $(x,\lambda)\in U\times H$, set $\delta:=\sigma(x,\lambda)$, and define
    \begin{equation*}
        \hat{g}:=P_K(g(x)+\lambda),\quad \hat{\lambda}:=g(x)+\lambda-\hat{g}.
    \end{equation*}
    Now, let $\alpha:=\Lag(x,\hat{\lambda})$ and $\beta:=g(x)-\hat{g}$. Then $\hat{\lambda}\in\NorCone{K}{\hat{g}}$ and, hence, $(x,\hat{\lambda})$ solves the perturbed KKT system corresponding to $\sigma:=(\alpha,\beta)$. Moreover, we have $\|\beta\|_H=\|\hat{g}-g(x)\|_H=\|g(x)-P_K(g(x)+\lambda)\|_H\le\delta$ and $\|\hat{\lambda}-\lambda\|_H=\|\beta\|_H\le\delta$. This implies
    \begin{equation*}
        \|\sigma\|_{X^*\times H}=\|\Lag(x,\hat{\lambda})\|_{X^*}+\|\beta\|_H\le
        \|\Lag(x,\lambda)\|_{X^*}+(c_1+1)\|\beta\|_H \le (c_1+2)\delta.
    \end{equation*}
    Hence, if $\delta=\sigma(x,\lambda)$ is small enough, then $\sigma$ becomes arbitrarily close to $0$. We can therefore apply (a) to $(x,\hat{\lambda})$ and obtain
    \begin{equation*}
        \|x-\bar{x}\|_X+\dist{\hat{\lambda},\mathcal{M}(\bar{x})}
        \le c\|\sigma\|_{X^*\times H}\le c(c_1+2)\delta.
    \end{equation*}
    But $\|\hat{\lambda}-\lambda\|_H\le\delta$ and, hence, $\dist{\hat{\lambda},\mathcal{M}(\bar{x})}\ge \dist{\lambda,\mathcal{M}(\bar{x})}-\delta$ by the nonexpansiveness of the distance function. This finally yields
    \begin{equation*}
        \|x-\bar{x}\|_X+\dist{\lambda,\mathcal{M}(\bar{x})}\le \bigl[c(c_1+2)+1\bigr]\delta,
    \end{equation*}
    and the proof is complete.
\end{proof}

\noindent
Let us stress that the distance estimate provided by the above theorem holds if $x$ is close to $\bar{x}$; in particular, no assumption on the proximity of $\lambda$ to $\mathcal{M}(\bar{x})$ is necessary. We also remark that (a) does not make any assertion about the existence of solutions to the perturbed KKT conditions \eqref{Eq:PerturbedKKT}. These may have solutions for some but not all $\sigma$.

Theorem \ref{Thm:ErrorBoundEquivalence} is our main tool for establishing local error bounds for the distance of $(x,\lambda)$ to the primal-dual solution set in terms of the residual mapping $\sigma$. To verify such an error bound, we only need to prove property (a) of the theorem. The following result does precisely that and is based on the perturbation theory from \cite{Bonnans2000}.

\begin{thm}\label{Thm:ErrorBound}
    Assume that $(\bar{x},\bar{\lambda})$ is a KKT point which satisfies SOSC and the strict Robinson condition. Then $\mathcal{M}(\bar{x})=\{\bar{\lambda}\}$ and there is a $c>0$ such that, for all $(x,\lambda)\in X\times H$ with $x$ sufficiently close to $\bar{x}$ and $\sigma(x,\lambda)$ sufficiently small,
    \begin{equation}\label{Eq:ErrorBound}
        \|x-\bar{x}\|_X+\|\lambda-\bar{\lambda}\|_H
        \le c \sigma(x,\lambda).
    \end{equation}
\end{thm}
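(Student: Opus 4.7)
My plan is to reduce the claim to Theorem \ref{Thm:ErrorBoundEquivalence} by verifying its condition (a) for the perturbed KKT system \eqref{Eq:PerturbedKKT}. First I would dispose of the uniqueness claim: the strict Robinson condition is noted right after Definition \ref{Dfn:SRC} to imply $\mathcal{M}(\bar{x})=\{\bar{\lambda}\}$, so that $\dist{\lambda,\mathcal{M}(\bar{x})}=\|\lambda-\bar{\lambda}\|_H$, and the target bound \eqref{Eq:ErrorBound} is exactly the specialization of Theorem \ref{Thm:ErrorBoundEquivalence}(b) to this case. Hence, it suffices to establish Theorem \ref{Thm:ErrorBoundEquivalence}(a).

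To establish (a), I would view \eqref{Eq:PerturbedKKT} as a canonical perturbation of the generalized equation
\[
    0\in \begin{pmatrix} F(x)+g'(x)^*\lambda \\ -g(x) \end{pmatrix}
    + \begin{pmatrix} 0 \\ \NorCone{K}{\cdot}^{-1}(\lambda) \end{pmatrix},
\]
i.e.\ the KKT generalized equation of the VI, where the parameter $p=(\alpha,\beta)\in X^*\times H$ shifts the right-hand side by $(\alpha,-\beta)$. This is precisely the parametric setting covered by \cite[Thm.~5.9]{Bonnans2000}: under the strict Robinson condition (which is what the cited theorem calls the strict constraint qualification at $(\bar x,\bar\lambda)$) together with SOSC formulated on the ``smaller'' extended critical cone $C_\eta(\bar x)$ of Definition \ref{Dfn:SOSC}, that theorem gives the existence of a neighborhood $U\times V$ of $(\bar x,\bar\lambda)$ and a constant $c>0$ such that every solution $(x_p,\lambda_p)\in U\times V$ of the perturbed system satisfies
\[
    \|x_p-\bar{x}\|_X+\|\lambda_p-\bar{\lambda}\|_H\le c\,\|p\|_{X^*\times H}
\]
for $\|p\|_{X^*\times H}$ sufficiently small. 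Since $\mathcal{M}(\bar{x})=\{\bar{\lambda}\}$, the left-hand side equals $\|x_p-\bar{x}\|_X+\dist{\lambda_p,\mathcal{M}(\bar{x})}$, which is exactly condition (a).

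The only subtlety is that Theorem \ref{Thm:ErrorBoundEquivalence}(a) allows arbitrary $\lambda_p\in H$, whereas \cite[Thm.~5.9]{Bonnans2000} only controls solutions with $\lambda_p$ already in a neighborhood $V$ of $\bar{\lambda}$. I would handle this by observing that SRC, being a surjectivity-type condition on the set $K_0$, yields a uniform bound on any multiplier $\lambda_p$ associated with $x_p$ close to $\bar x$ and small $p$: from $g'(x_p)^*\lambda_p=-F(x_p)+\alpha$ and the openness statement in \eqref{Eq:SRC} one deduces via a standard open mapping/Robinson-stability argument that $\{\lambda_p\}$ is bounded and indeed converges to $\bar{\lambda}$, so the Bonnans--Shapiro estimate is applicable.

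The main obstacle I anticipate is precisely this last point, namely verifying that \emph{any} multiplier $\lambda_p$ solving the perturbed system (not just those in a preselected neighborhood of $\bar{\lambda}$) lies close to $\bar{\lambda}$ once $\|x_p-\bar x\|_X$ and $\|p\|_{X^*\times H}$ are small; this is where SRC rather than the plain Robinson condition is essential. Once this qualitative proximity is in hand, the quantitative Lipschitz estimate from \cite[Thm.~5.9]{Bonnans2000} yields (a), and the implication (a)$\Rightarrow$(b) already proved in Theorem \ref{Thm:ErrorBoundEquivalence} delivers \eqref{Eq:ErrorBound}.
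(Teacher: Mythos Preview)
Your proposal is correct and follows essentially the same route as the paper: establish uniqueness of $\bar{\lambda}$ from SRC, then verify condition (a) of Theorem~\ref{Thm:ErrorBoundEquivalence} by invoking \cite[Thm.~5.9]{Bonnans2000} on the canonically perturbed KKT system, and conclude via (a)$\Rightarrow$(b). The only cosmetic difference is that the paper frames the verification of (a) as a proof by contradiction (assuming sequences violating the estimate and then citing \cite[Thm.~5.9]{Bonnans2000} to derive a contradiction), whereas you argue directly; your explicit discussion of why the multiplier $\lambda_p$ must already lie near $\bar{\lambda}$ is a point the paper leaves implicit in its appeal to \cite[Thm.~5.9]{Bonnans2000}, which indeed absorbs this step under SRC.
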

\begin{proof}
    The uniqueness of $\bar{\lambda}$ follows as in \cite[Prop.\ 4.47]{Bonnans2000}, see also the discussion in Section 5.1.2 of that reference. For the error bound result, we essentially need to apply \cite[Thm.\ 5.9]{Bonnans2000} and Theorem \ref{Thm:ErrorBoundEquivalence}. Since some technical details need to be considered, we give a formal proof here. To this end, assume that the error bound in question does not hold. Then property (a) from Theorem \ref{Thm:ErrorBoundEquivalence} does not hold either; hence, there are sequences $x^k\to\bar{x}$, $\{\lambda^k\}\subseteq H$ and $\{\sigma^k\}\subseteq X^*\times H$ with $\sigma^k=(\alpha^k,\beta^k)\to 0$ such that, for all $k$, $(x^k,\lambda^k)$ satisfies the perturbed KKT conditions \eqref{Eq:PerturbedKKT} corresponding to $\sigma^k$, and
    \begin{equation}\label{Eq:CorErrorBound1}
        \|x^k-\bar{x}\|_X+\|\lambda^k-\bar{\lambda}\|_H \ge k \|\sigma^k\|_{X^*\times H}.
    \end{equation}
    Now, let $F(x,\sigma):=F(x)-\alpha$ and $G(x,\sigma):=g(x)-\beta$ for $\sigma=(\alpha,\beta)\in X^*\times H$. Then $(x^k,\lambda^k)$ satisfies
    \begin{equation*}
        F(x^k,\sigma^k)+D_x G(x^k,\sigma^k)^* \lambda^k=0, \quad
        \lambda^k\in \NorCone{K}{G(x^k,\sigma^k)}
    \end{equation*}
    for all $k$. Applying \cite[Thm.\ 5.9]{Bonnans2000} yields a contradiction to \eqref{Eq:CorErrorBound1}.
\end{proof}

\noindent
The function $\sigma$ is locally Lipschitz-continuous with respect to $(x,\lambda)$, and globally so with respect to $\lambda$. Hence, we can extend the one-sided error bound \eqref{Eq:ErrorBound} to
\begin{equation}\label{Eq:DoubleErrorBound}
    c_1 \sigma(x,\lambda) \le \|x-\bar{x}\|_X+\|\lambda-\bar{\lambda}\|_H
    \le c_2 \sigma(x,\lambda)
\end{equation}
for suitable constants $c_1,c_2>0$ and all $(x,\lambda)\in X\times H$ with $x$ near $\bar{x}$.

For certain problem classes, it is possible to establish error bounds under weaker assumptions than those given above. The most important example in this direction is if the set $K$ is (generalized) polyhedral, e.g.\ in nonlinear programming. Roughly speaking, one can use Hoffman's lemma \cite[Thm.\ 2.200]{Bonnans2000} to get the ``dual part'' of the error bound for free, while the primal part again follows from SOSC. As a result, one obtains a primal-dual error bound under SOSC alone (with the restriction that the multiplier is not necessarily unique). Unsurprisingly, this result does not extend to the non-polyhedral case, which shows that additional assumptions such as SRC are inevitable.
\begin{exm}
    Let $X:=H:=\ell^2(\R)$ be the space of square-summable real sequences. Consider the optimization problem \eqref{Eq:Opt}, \eqref{Eq:M} with $f(x):=\|x\|_X^2/2$, $g(x):=(x_i/i)_{i=1}^{\infty}$, and $K$ the nonnegative cone in $X$. It is easy to see that $(\bar{x},\bar{\lambda}):=(0,0)$ is the unique KKT point of this problem, and that SOSC holds. Now, let $x^k:=e^k/k$ and $\lambda^k:=-e^k$, where $\{e^k\}$ is the sequence of unit vectors. Then
    \begin{equation*}
        \sigma(x^k,\lambda^k)=\|\Lag(x^k,\lambda^k)\|_{X^*}+\|g(x^k)-P_K(g(x^k)+\lambda^k)\|_H
        =k^{-2}
    \end{equation*}
    for all $k$. Moreover, $x^k\to \bar{x}$, but $\lambda^k\not\to \bar{\lambda}$. Hence, a local error bound does not hold. (In particular, SRC cannot hold, even though the Lagrange multiplier is actually unique.) A slightly different example is obtained by setting $\hat{x}^k:=e^k/k^2$ and $\hat{\lambda}^k:=-e^k/k$. In this case, $(\hat{x}^k,\hat{\lambda}^k)\to (\bar{x},\bar{\lambda})$, but an easy calculation shows that
    \begin{equation*}
        \sigma(\hat{x}^k,\hat{\lambda}^k)=k^{-3} \quad\text{and}\quad
        \|\hat{x}^k-\bar{x}\|_X+\|\hat{\lambda}^k-\bar{\lambda}\|_H=k^{-2}+k^{-1}.
    \end{equation*}
    In particular, the error bound is violated even if the multiplier is close to $\bar{\lambda}$.
\end{exm}

\noindent
We close this section by noting that the error bound in Theorem~\ref{Thm:ErrorBound} necessarily implies that the Lagrange multiplier $\bar{\lambda}$ is unique. It is natural to ask whether sufficient conditions can be established which guarantee the error bound property with a nonunique multiplier (as in the statement of Theorem~\ref{Thm:ErrorBoundEquivalence}). However, it turns out that the resulting conditions are often of technical nature, see \cite[Thm.~4.51]{Bonnans2000}, and not easily verified for common problem classes. Therefore, and since the case covered by Theorem~\ref{Thm:ErrorBound} suffices for our applications, we restrict ourselves to the situation where $\bar{\lambda}$ is unique.

\section{The Augmented Lagrangian Method}\label{Sec:Method}

We now present the augmented Lagrangian method for the variational inequality \eqref{Eq:VI}. The main approach is to penalize the function $g$ and therefore reduce the VI to a sequence of (unconstrained) nonlinear equations. Consider the augmented Lagrangian
\begin{equation}\label{Eq:AL}
   \Lag_{\rho}:X\times H\to X^*, \quad \Lag_{\rho}(x,\lambda):=
   F(x)+\rho g'(x)^* \mleft[ g(x)+\frac{\lambda}{\rho}-P_K\mleft(
   g(x)+\frac{\lambda}{\rho}\mright) \mright].
\end{equation}
Note that, if $K$ is a cone, then we can simplify the above formula to $\Lag_{\rho}(x,\lambda)=F(x)+g'(x)^* P_{K^{\circ}}(\lambda+\rho g(x))$ by using Moreau's decomposition \cite{Bauschke2011,Moreau1962}.

For the construction of our algorithm, we will need a means of controlling the penalty parameters. To this end, we define the utility function
\begin{equation}\label{Eq:V}
   V(x,\lambda,\rho):=\|\Lag_{\rho}(x,\lambda)\|_{X^*}+
   \left\|g(x)-P_K\mleft(g(x)+\frac{\lambda}{\rho}\mright)\right\|_H.
\end{equation}
This function requires some elaboration. The first term in \eqref{Eq:V} measures the precision with which the subproblem was solved in the current iteration. The second term is a composite measure of feasibility and complementarity; it arises from an inherent slack variable transformation which is often used to define the augmented Lagrangian for inequality or cone constraints. As a result, the function $V$ measures optimality, feasibility and complementarity at the current iterate.

\begin{alg}[Augmented Lagrangian method]\label{Alg:ALM}\leavevmode
\begin{itemize}[font=\normalfont]
    \item[(S.0)] Let $(x^0,\lambda^0)\in X\times H$, $B\subseteq 
        H$ bounded, $\rho_0>0$, $\gamma>1$, $\tau\in(0,1)$, 
        and set $k:=0$.
    \item[(S.1)] If $(x^k,\lambda^k)$ satisfies a suitable termination 
        criterion: STOP.
    \item[(S.2)] Choose $\BddMul^k\in B$ and compute an inexact zero (see below)
        $x^{k+1}$ of $\Lag_{\rho_k}(\cdot,\BddMul^k)$.
    \item[(S.3)] Update the vector of multipliers to
        \begin{equation}\label{Eq:MultUpdate}
            \lambda^{k+1}:=\rho_k \mleft[ g(x^{k+1})+\frac{\BddMul^k}{\rho_k}
            -P_K\mleft(g(x^{k+1})+\frac{\BddMul^k}{\rho_k}\mright) \mright].
        \end{equation}
    \item[(S.4)] If $k=0$ or
        \begin{equation}\label{Eq:RhoTest}
            V(x^{k+1},\BddMul^k,\rho_k)\le \tau V(x^k,\BddMul^{k-1},\rho_{k-1})
        \end{equation}
        holds, set $\rho_{k+1}:=\rho_k$; otherwise, set $\rho_{k+1}:=\gamma\rho_{k}$.
    \item[(S.5)] Set $k\leftarrow k+1$ and go to \textnormal{(S.1)}.
\end{itemize}
\end{alg}

\noindent
Let us make some simple observations. First, regardless of the primal iterates $\{x^k\}$, the multipliers $\{\lambda^k\}$ always lie in the polar cone $K_{\infty}^{\circ}$ by Lemma \ref{Lem:RecessionConePolar}. Moreover, if $K$ is a cone, then the Moreau decomposition \cite{Moreau1962} implies that $\lambda^{k+1}=P_{K^{\circ}}(\BddMul^k+\rho_k g(x^{k+1}))$.

Secondly, we note that Algorithm \ref{Alg:ALM} uses a safeguarded multiplier sequence $\{\BddMul^k\}$ in certain places where classical augmented Lagrangian methods use the sequence $\{\lambda^k\}$. This bounding scheme goes back to \cite{Andreani2007,Pang2005} and is crucial to establishing strong global convergence results for the method \cite{Andreani2007,Birgin2010,Birgin2014,Kanzow2016}. In practice, one usually tries to keep $\BddMul^k$ as ``close'' as possible to $\lambda^k$, e.g.\ by defining $\BddMul^k:=P_B(\lambda^k)$, where $B$ (the bounded set from the algorithm) is chosen suitably to allow cheap projections.

The third observation is that if the sequence of penalty parameters $\{\rho_k\}$ remains bounded, then \eqref{Eq:RhoTest} yields $V(x^{k+1},\BddMul^k,\rho_k)\to 0$. In this case, the definition of $V$ implies that both the residual $\|\Lag_{\rho_k}(x^{k+1},\BddMul^k)\|_{X^*}$ of the subproblems and the composite feasibility-complementarity measure converge to zero. Hence, from a theoretical point of view, the case of bounded $\{\rho_k\}$ is the ``good'' case. In Section \ref{Sec:LocalConv}, we will actually prove the boundedness of $\{\rho_k\}$ under certain assumptions, and this result crucially depends on the fact that the function $V$ involves both terms from \eqref{Eq:V}.

For the remainder of this paper, we make the following assumption.

\begin{asm}\label{Asm:Subproblems}
    There is a null sequence $\{\varepsilon_k\}\subseteq [0,\infty)$ such that
    \begin{equation*}
        \|\Lag_{\rho_k}(x^{k+1},\BddMul^k)\|_{X^*}\le \varepsilon_{k+1}
        \quad\text{for all }k.
    \end{equation*}
\end{asm}

\noindent
This assumption is fairly natural and basically asserts that $x^{k+1}$ is an approximate zero point of $\Lag_{\rho_k}(\cdot,\BddMul^k)$, and that the degree of inexactness vanishes as $k\to\infty$.

\section{Global Convergence}\label{Sec:GlobalConv}

In this section, we discuss the global convergence properties of Algorithm \ref{Alg:ALM}. Some general results in this direction were obtained in \cite{Kanzow2017a,Kanzow2016} for optimization and generalized Nash equilibrium problems by assuming that the sequence $\{x^k\}$ has a limit point which satisfies a suitable constraint qualification.

Here, we pursue a slightly different approach. Since the constraints occurring in VIs are often convex, we can use this convexity to directly show that (weak) limit points are solutions of the VI. This idea has the advantage that we do not need any constraint qualification (in return, we do not get much information on the sequence $\{\lambda^k\}$).

Recall that we have already assumed $F$ to be continuously differentiable and $g$ twice continuously differentiable (for this section, one degree less would actually be sufficient). We now make the following additional assumptions.

\begin{asm}\label{Asm:GeneralConv}
    We assume that $g$ is concave with respect to $K_{\infty}$ (see Section \ref{Sec:Prelims}) and that $\dual{F(x),x-y}$ is weakly sequentially lsc with respect to $x$ for all $y\in X$.
\end{asm}

\noindent
The first of the above conditions ensures the convexity of the set $M$, see Lemma \ref{Lem:GeneralizedConvexity}. The second assumption implies, roughly speaking, that weak limit points of a sequence of ``approximate solutions'' of the VI are exact solutions. Note that this condition has also been used in certain existence results for VIs \cite{Isac1992}.

\begin{lem}\label{Lem:Feasibility}
    Let Assumptions \ref{Asm:Subproblems}, \ref{Asm:GeneralConv} hold, and let $\bar{x}$ be a weak limit point of $\{x^k\}$. Then $\bar{x}$ is a minimizer of the convex function $d_K\circ g$. In particular, if the feasible set $M$ is nonempty, then $\bar{x}$ is feasible.
\end{lem}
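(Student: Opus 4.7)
The plan is to rewrite the augmented Lagrangian as $F$ plus the gradient of a convex auxiliary functional, derive a variational inequality from the inexact stationarity of $x^{k+1}$, and then dichotomize on the behavior of $\{\rho_k\}$.

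First, for each $k$ I would introduce $\phi_k(x) := \tfrac{1}{2} d_K^2(g(x) + \BddMul^k/\rho_k)$ and observe, via Lemma \ref{Lem:GeneralizedConvexity} applied to $m(y) := \tfrac{1}{2} d_K^2(y + \BddMul^k/\rho_k)$ (which is convex as the square of a nonnegative convex function, and decreasing with respect to $\le_K$ since the shift by $\BddMul^k/\rho_k$ preserves the ordering and $d_K$ itself is decreasing), that $\phi_k$ is convex on $X$. The chain rule gives $\phi_k'(x) = g'(x)^*[g(x) + \BddMul^k/\rho_k - P_K(g(x) + \BddMul^k/\rho_k)]$, so that $\Lag_{\rho_k}(\cdot,\BddMul^k) = F(\cdot) + \rho_k \phi_k'(\cdot)$. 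Combining this identity with Assumption \ref{Asm:Subproblems} and the subgradient inequality $\dual{\phi_k'(x^{k+1}), x^{k+1} - y} \ge \phi_k(x^{k+1}) - \phi_k(y)$ yields, for every $y \in X$, the basic estimate
\begin{equation*}
    \rho_k\bigl[\phi_k(x^{k+1}) - \phi_k(y)\bigr] \le \dual{F(x^{k+1}), y - x^{k+1}} + \varepsilon_{k+1}\|x^{k+1} - y\|_X.
\end{equation*}

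Second, $\{\rho_k\}$ is nondecreasing by construction and takes values in the discrete set $\{\rho_0\gamma^j : j \ge 0\}$, so it either stabilizes at some $\bar{\rho}$ or diverges to $+\infty$. In the stabilizing case, the acceptance test in (S.4) holds for all sufficiently large $k$, so by telescoping $V(x^{k+1},\BddMul^k,\rho_k) \to 0$; by the definition \eqref{Eq:V} and the fact that $P_K(\cdot) \in K$, this forces $d_K(g(x^{k+1})) \to 0$, and the weak sequential lower semicontinuity of the convex continuous function $d_K \circ g$ (Lemma \ref{Lem:GeneralizedConvexity}(a)) then yields $d_K(g(\bar x)) = 0$, so $\bar{x} \in M$ and the claim is immediate. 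In the divergent case, I would fix an arbitrary $y \in X$, divide the basic estimate by $\rho_k$, and take the limsup along the subsequence with $x^{k+1} \wto \bar x$. Since $B$ is bounded, $\BddMul^k/\rho_k \to 0$, so continuity of $d_K$ gives $\phi_k(y) \to \tfrac{1}{2} d_K^2(g(y))$; the $\varepsilon_{k+1}$-term vanishes; and the weak sequential lsc of $x \mapsto \dual{F(x), x - y}$ from Assumption \ref{Asm:GeneralConv} furnishes $\limsup \dual{F(x^{k+1}), y - x^{k+1}} \le \dual{F(\bar x), y - \bar x} < \infty$, which combined with $\rho_k \to \infty$ forces $\limsup \rho_k^{-1}\dual{F(x^{k+1}), y - x^{k+1}} \le 0$. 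Consequently $\limsup \tfrac{1}{2} d_K^2(g(x^{k+1}) + \BddMul^k/\rho_k) \le \tfrac{1}{2} d_K^2(g(y))$, and since $d_K$ is $1$-Lipschitz and $\BddMul^k/\rho_k \to 0$ this is the same as $\limsup d_K(g(x^{k+1})) \le d_K(g(y))$. Combined with weak lsc of $d_K \circ g$, this gives $d_K(g(\bar x)) \le d_K(g(y))$ for every $y \in X$, which is the desired minimization property.

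The hard part of this plan is the divergent-penalty branch: because $F$ is only assumed $C^1$, there is no automatic norm bound on $F(x^{k+1})$ along a merely weakly convergent subsequence, so one cannot naively pass to the limit in $\rho_k^{-1}\dual{F(x^{k+1}), y - x^{k+1}}$. The argument is rescued by the observation that only an upper bound on this quantity is needed, and this is exactly what the one-sided weak sequential lsc hypothesis on $\dual{F(\cdot), \cdot - y}$ in Assumption \ref{Asm:GeneralConv} provides.
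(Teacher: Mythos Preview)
Your proposal is correct and follows essentially the same route as the paper: the same auxiliary function $\phi_k=\tfrac12 h_k$ with $h_k(x)=d_K^2(g(x)+\BddMul^k/\rho_k)$, the same convexity argument via Lemma~\ref{Lem:GeneralizedConvexity}, the same case split on boundedness of $\{\rho_k\}$, and the same use of the one-sided weak lsc hypothesis on $\dual{F(\cdot),\cdot-y}$ to control the $F$-term. The only difference is stylistic: the paper argues by contradiction (assuming $d_K(g(y))<d_K(g(\bar x))$ and deriving that the right-hand side of the basic inequality tends to $-\infty$), whereas you pass to the $\limsup$ directly and then invoke weak lower semicontinuity of $d_K\circ g$ at the end; the mathematical content is identical.
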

\begin{proof}
    Note that the function $d_K\circ g$ is convex by Lemma \ref{Lem:GeneralizedConvexity} and continuous, hence weakly sequentially lower semicontinuous \cite[Thm.\ 9.1]{Bauschke2011}. If $\{\rho_k\}$ remains bounded, then the penalty updating scheme \eqref{Eq:RhoTest} implies
    \begin{equation*}
        d_K(g(x^{k+1})) \le \mleft\| g(x^{k+1})-P_K \mleft( g(x^{k+1})+\frac{\BddMul^k}{\rho_k}
        \mright) \mright\|_H \le V(x^{k+1},\BddMul^k,\rho_k)\to 0
    \end{equation*}
    and therefore $d_K(g(\bar{x}))=0$. We now assume that $\rho_k\to\infty$ and define the auxiliary functions $h_k(x)=d_K^2(g(x)+\BddMul^k/\rho_k)$. Note that $h_k$ is continuously differentiable \cite[Cor.\ 12.30]{Bauschke2011}. Let $x^{k+1}\wto_{\mathcal{K}}\bar{x}$ for some index set $\mathcal{K}\subseteq \N$ and assume that there is a point $y\in X$ with $d_K(g(y))<d_K(g(\bar{x}))$. The weak sequential lower semicontinuity of $d_K\circ g$ and the boundedness of $\{\BddMul^k\}$ imply that
    \begin{equation*}
        \liminf_{k\in\mathcal{K}} h_k(x^{k+1})
        =\liminf_{k\in\mathcal{K}} d_K^2 \bigl(g(x^{k+1})+\BddMul^k/\rho_k\bigr)
        \ge d_K^2(g(\bar{x}))
    \end{equation*}
    and $h_k(y)\to d_K^2(g(y))$. Hence, there is a constant $c_1>0$ such that $h_k(x^{k+1})-h_k(y)\ge c_1$ for all $k\in\mathcal{K}$ sufficiently large. Since $h_k$ is convex by Lemma \ref{Lem:GeneralizedConvexity}, it follows that
    \begin{equation}\label{Eq:LemFeasibility1}
        \dual{h_k'(x^{k+1}),y-x^{k+1}}\le h_k(y)-h_k(x^{k+1})\le -c_1
    \end{equation}
    for all $k\in\mathcal{K}$ sufficiently large. Now, let $\{\varepsilon_k\}$ be the sequence from Assumption \ref{Asm:Subproblems}. Using \cite[Cor.\ 12.30]{Bauschke2011} for the derivative of $h_k$, we obtain
    \begin{align*}
        -\varepsilon_{k+1} \|y-x^{k+1}\|_X & \le \dual{\Lag_{\rho_k}(x^{k+1},\BddMul^k),y-x^{k+1}} \\
        & = \dual{F(x^{k+1}),y-x^{k+1}}+\frac{\rho_k}{2}\dual{h_k'(x^{k+1}),y-x^{k+1}}.
    \end{align*}
    By Assumption \ref{Asm:GeneralConv}, the function $\dual{F(x),x-y}$ is weakly sequentially lsc with respect to $x$. Hence, there is a constant $c_2\in\R$ such that $\dual{F(x^{k+1}),y-x^{k+1}}\le c_2$ for all $k\in\mathcal{K}$. This together with \eqref{Eq:LemFeasibility1} implies
    \begin{equation*}
        -\varepsilon_{k+1} \|y-x^{k+1}\|_X\le c_2-\frac{\rho_k c_1}{2}\to -\infty.
    \end{equation*}
    Since $\{x^{k+1}\}_{\mathcal{K}}$ is bounded and $\varepsilon_k\to 0$, this is a contradiction.
\end{proof}

\noindent
Note that Lemma \ref{Lem:Feasibility} guarantees that every weak limit point $\bar{x}$ automatically minimizes the constraint violation even if the feasible set $M$ is empty.

We now prove the optimality of limit points. To this end, we first need a technical lemma which essentially asserts some sort of ``approximate normality'' of $\lambda^{k+1}$ with respect to $K$ and $g(x^{k+1})$, the latter not necessarily being an element of $K$. Note that the result does not require any assumptions but directly follows from the definition of $\lambda^{k+1}$ as well as the updating scheme \eqref{Eq:RhoTest}.

\begin{lem}\label{Lem:Limsup}
    We have $\limsup_{k\to\infty}\scal{\lambda^{k+1},y-g(x^{k+1})}\le 0$ for all $y\in K$.
\end{lem}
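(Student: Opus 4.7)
The plan is to reduce the claim to a bound of the form $\scal{\lambda^{k+1},\BddMul^k-\lambda^{k+1}}/\rho_k$ via the projection characterization of the normal cone, then dispose of that expression by Young's inequality together with a short case split on the penalty sequence.

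First, I would set $z^{k+1}:=g(x^{k+1})+\BddMul^k/\rho_k$, so that the update \eqref{Eq:MultUpdate} reads $\lambda^{k+1}/\rho_k = z^{k+1} - P_K(z^{k+1})$. By \cite[Prop.\ 6.46]{Bauschke2011} this implies $\lambda^{k+1}/\rho_k \in \NorCone{K}{P_K(z^{k+1})}$, hence for every $y\in K$ I have $\scal{\lambda^{k+1},\,y-P_K(z^{k+1})}\le 0$. Decomposing $y-g(x^{k+1}) = (y-P_K(z^{k+1})) + (P_K(z^{k+1})-g(x^{k+1}))$ and observing that $P_K(z^{k+1})-g(x^{k+1}) = (\BddMul^k-\lambda^{k+1})/\rho_k$ directly from the definition of $\lambda^{k+1}$ yields the key inequality
\begin{equation*}
    \scal{\lambda^{k+1},\,y-g(x^{k+1})} \le \scal{\lambda^{k+1},\,\BddMul^k-\lambda^{k+1}}/\rho_k.
\end{equation*}

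Next, Young's inequality $2\scal{a,b}\le\|a\|^2+\|b\|^2$ gives $\scal{\lambda^{k+1},\BddMul^k-\lambda^{k+1}}/\rho_k \le (\|\BddMul^k\|_H^2 - \|\lambda^{k+1}\|_H^2)/(2\rho_k)$. I would then split into two cases according to the behaviour of $\{\rho_k\}$, which by construction is non-decreasing. If $\rho_k\to\infty$, then, since $\BddMul^k\in B$ is bounded, $\|\BddMul^k\|_H^2/(2\rho_k)\to 0$, and the upper bound tends to zero. If instead $\{\rho_k\}$ is bounded, then the updating rule \eqref{Eq:RhoTest} forces $\rho_k$ to be eventually constant and $V(x^{k+1},\BddMul^k,\rho_k)\to 0$; since $V$ majorises $\|g(x^{k+1})-P_K(z^{k+1})\|_H = \|\lambda^{k+1}-\BddMul^k\|_H/\rho_k$, one obtains $\|\lambda^{k+1}-\BddMul^k\|_H\to 0$, so $\{\lambda^{k+1}\}$ is bounded and $\|\BddMul^k\|_H^2-\|\lambda^{k+1}\|_H^2\to 0$, whence again the upper bound vanishes. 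Taking the limsup in the key inequality completes the proof.

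The main (mild) obstacle is that the single Young-type estimate $\|\BddMul^k\|_H^2/(2\rho_k)$ does not by itself go to zero in the bounded-$\rho_k$ scenario, so one has to retain the negative term $-\|\lambda^{k+1}\|_H^2/(2\rho_k)$ and extract information on $\lambda^{k+1}-\BddMul^k$ from the $V$-reducing updating scheme. This is precisely the point at which the built-in combination of the two terms in $V$ (optimality residual plus feasibility/complementarity residual) is used; no convexity, monotonicity, or subproblem assumption enters.
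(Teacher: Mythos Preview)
Your proof is correct and follows essentially the same route as the paper: both derive the key inequality $\scal{\lambda^{k+1},y-g(x^{k+1})}\le\rho_k^{-1}\bigl[\scal{\lambda^{k+1},\BddMul^k}-\|\lambda^{k+1}\|_H^2\bigr]$ from the normal-cone property of the projection, then split on whether $\{\rho_k\}$ is bounded or not. The only cosmetic difference is that for the unbounded case the paper maximises the quadratic in $\lambda$ to obtain the bound $\|\BddMul^k\|_H^2/(4\rho_k)$, whereas you reach $(\|\BddMul^k\|_H^2-\|\lambda^{k+1}\|_H^2)/(2\rho_k)$ via Young's inequality; for the bounded case the paper uses $\scal{\lambda^{k+1},\BddMul^k-\lambda^{k+1}}\to 0$ directly rather than passing through the difference of squares, but the underlying $V\to 0 \Rightarrow \|\lambda^{k+1}-\BddMul^k\|_H\to 0$ argument is identical.
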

\begin{proof}
    Let $y\in K$ and define the sequence $s^{k+1}:=P_K(g(x^{k+1})+\BddMul^k/\rho_k)$. Then $s^{k+1}\in K$ and it follows from \cite[Prop.\ 6.46]{Bauschke2011} that $\lambda^{k+1}\in\NorCone{K}{s^{k+1}}$. Moreover, we have
    \begin{equation}\label{Eq:LemLimsup1}
        g(x^{k+1})=\frac{\lambda^{k+1}-\BddMul^k}{\rho_k}+s^{k+1}.
    \end{equation}
    This yields
    \begin{align}
        \scal{\lambda^{k+1},y-g(x^{k+1})} & =
        \scal{\lambda^{k+1},y-\frac{1}{\rho_k}(\lambda^{k+1}-\BddMul^k)-s^{k+1}} \notag \\
        & \le \frac{1}{\rho_k} \Bigl[\scal{\lambda^{k+1},\BddMul^k}-\|\lambda^{k+1}\|_H^2\Bigr]
        \label{Eq:LemLimsup2},
    \end{align}
    where we used $\lambda^{k+1}\in\NorCone{K}{s^{k+1}}$ for the last inequality. Now, if $\{\rho_k\}$ is bounded, then \eqref{Eq:RhoTest} and \eqref{Eq:LemLimsup1} imply $\|\lambda^{k+1}-\BddMul^k\|_H/\rho_k\to 0$ and therefore $\|\lambda^{k+1}-\BddMul^k\|_H\to 0$. This yields the boundedness of $\{\lambda^{k+1}\}$ in $H$ as well as $\scal{\lambda^{k+1},\BddMul^k}-\|\lambda^{k+1}\|_H^2=\scal{\lambda^{k+1},\BddMul^k-\lambda^{k+1}}\to 0$. Hence, the desired results follows from \eqref{Eq:LemLimsup2}. We now assume that $\rho_k\to\infty$. Note that \eqref{Eq:LemLimsup2} is a quadratic function in $\lambda$. A simple calculation therefore shows that
    \begin{equation*}
        \scal{\lambda^{k+1},y-g(x^{k+1})}\le \frac{1}{4\rho_k}\|\BddMul^k\|_H^2.
    \end{equation*}
    The boundedness of $\{\BddMul^k\}$ now implies $\limsup_{k\to\infty} \scal{\lambda^{k+1},y-g(x^{k+1})}\le 0$.
\end{proof}

\noindent
The above result can be stated more concisely if $K$ is a cone. By inserting $0\in K$ into the inequality, it is easy to see that it is equivalent to $\liminf_{k\to\infty} \scal{\lambda^{k+1},g(x^{k+1})}\ge 0$.

We now turn to the main global convergence result.

\begin{thm}\label{Thm:Optimality}
    Let Assumptions \ref{Asm:Subproblems}, \ref{Asm:GeneralConv} hold, and let $\bar{x}$ be a weak limit point of $\{x^k\}$. If the feasible set $M$ is nonempty, then $\bar{x}$ is feasible and solves the VI.
\end{thm}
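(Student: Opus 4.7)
The plan is to reduce the VI claim to the convex form~\eqref{Eq:VI_Convex} and then exploit the approximate-zero property of the primal iterates together with the structural tools already at hand. Feasibility of $\bar{x}$ will follow at once from Lemma~\ref{Lem:Feasibility}, since $M$ nonempty forces the minimum of $d_K \circ g$ to equal zero, hence $g(\bar{x}) \in K$. Because $M$ is convex by Lemma~\ref{Lem:GeneralizedConvexity}(c), proving that $\bar{x}$ solves~\eqref{Eq:VI} reduces to verifying $\dual{F(\bar{x}), y - \bar{x}} \ge 0$ for every $y \in M$.

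To establish this, I will fix such a $y$ and let $\mathcal{K} \subseteq \N$ index a subsequence with $x^{k+1} \wto \bar{x}$. A direct inspection of~\eqref{Eq:AL} and~\eqref{Eq:MultUpdate} yields $\Lag_{\rho_k}(x^{k+1}, \BddMul^k) = F(x^{k+1}) + g'(x^{k+1})^* \lambda^{k+1}$, so Assumption~\ref{Asm:Subproblems}, tested in direction $y - x^{k+1}$, gives
\begin{equation*}
    \dual{F(x^{k+1}), y - x^{k+1}} + \scal{\lambda^{k+1}, g'(x^{k+1})(y - x^{k+1})} \ge -\varepsilon_{k+1}\|y - x^{k+1}\|_X.
\end{equation*}
The decisive maneuver is to replace the linearization $g'(x^{k+1})(y - x^{k+1})$ by the finite difference $g(y) - g(x^{k+1})$. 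Concavity of $g$ supplies $g(x^{k+1}) + g'(x^{k+1})(y - x^{k+1}) - g(y) \in K_{\infty}$, and since $\lambda^{k+1} \in K_{\infty}^{\circ}$ by Lemma~\ref{Lem:RecessionConePolar} (as noted after Algorithm~\ref{Alg:ALM}), pairing with $\lambda^{k+1}$ gives
\begin{equation*}
    \scal{\lambda^{k+1}, g'(x^{k+1})(y - x^{k+1})} \le \scal{\lambda^{k+1}, g(y) - g(x^{k+1})}.
\end{equation*}

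Chaining these two estimates and passing to $\limsup_{k \in \mathcal{K}}$, I will invoke Lemma~\ref{Lem:Limsup} with the admissible element $g(y) \in K$ to conclude that $\limsup_{k \in \mathcal{K}} \scal{\lambda^{k+1}, g(y) - g(x^{k+1})} \le 0$. Combined with $\varepsilon_{k+1}\|y - x^{k+1}\|_X \to 0$ (bounded weakly convergent sequences are norm-bounded), this forces $\liminf_{k \in \mathcal{K}} \dual{F(x^{k+1}), x^{k+1} - y} \le 0$. The weak sequential lower semicontinuity of $\dual{F(\cdot), \cdot - y}$ from Assumption~\ref{Asm:GeneralConv} then yields $\dual{F(\bar{x}), \bar{x} - y} \le 0$, which is the required variational inequality.

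The hard part will be the dual term $\scal{\lambda^{k+1}, g'(x^{k+1})(y - x^{k+1})}$, because $\{\lambda^{k+1}\}$ is not known to be bounded when $\rho_k \to \infty$, so a naive limit is unavailable. The strategy above bypasses this by first converting the linearization into a function-value difference (via concavity and the polar cone property) and then applying the uniform one-sided control provided by Lemma~\ref{Lem:Limsup}; this is precisely the reason the concavity assumption on $g$ was built into Assumption~\ref{Asm:GeneralConv}.
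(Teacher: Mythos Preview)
Your proposal is correct and follows essentially the same approach as the paper's proof: feasibility via Lemma~\ref{Lem:Feasibility}, the identity $\Lag_{\rho_k}(x^{k+1},\BddMul^k)=\Lag(x^{k+1},\lambda^{k+1})$, the concavity estimate turning $\scal{\lambda^{k+1},g'(x^{k+1})(y-x^{k+1})}$ into $\scal{\lambda^{k+1},g(y)-g(x^{k+1})}$, Lemma~\ref{Lem:Limsup}, and the weak lsc of $\dual{F(\cdot),\cdot - y}$. The only cosmetic difference is that the paper obtains the concavity step by citing Lemma~\ref{Lem:GeneralizedConvexity}(b) (convexity of $x\mapsto\scal{\lambda^{k+1},g(x)}$) rather than writing out the tangent-line inequality for $g$ directly, and does not make the reduction to the convex VI form~\eqref{Eq:VI_Convex} explicit.
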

\begin{proof}
    Let $x^{k+1}\wto_{\mathcal{K}}\bar{x}$ for some $\mathcal{K}\subseteq\N$. The feasibility claim follows from Lemma \ref{Lem:Feasibility}. For the optimality, let $y\in M$ be any feasible point. Then $\dual{\Lag_{\rho_k}(x^{k+1},\BddMul^k),y-x^{k+1}}\ge -\varepsilon_{k+1} \|y-x^{k+1}\|_X$ by Assumption \ref{Asm:Subproblems} and, since $\Lag_{\rho_k}(x^{k+1},\BddMul^k)=\Lag(x^{k+1},\lambda^{k+1})$, we get
    \begin{align*}
        -\varepsilon_{k+1} \|y-x^{k+1}\|_X & \le
        \dual{F(x^{k+1})+g'(x^{k+1})^* \lambda^{k+1},y-x^{k+1}} \\
        & \le \dual{F(x^{k+1}),y-x^{k+1}}+\scal{\lambda^{k+1},g(y)-g(x^{k+1})},
    \end{align*}
    where we used the fact that $x\mapsto \scal{\lambda^{k+1},g(x)}$ is convex by Lemma \ref{Lem:GeneralizedConvexity} (recall that $\lambda^{k+1}\in K_{\infty}^{\circ}$). Using $\varepsilon_k\to 0$ and Lemma \ref{Lem:Limsup}, we now obtain $\liminf_{k\in\mathcal{K}}\dual{F(x^{k+1}),y-x^{k+1}}\ge 0$. Since $\dual{F(x),x-y}$ is weakly sequentially lsc, this implies $\dual{F(\bar{x}),y-\bar{x}}\ge 0$.
\end{proof}

\section{Local Convergence}\label{Sec:LocalConv}

We will now consider the local convergence characteristics of Algorithm \ref{Alg:ALM}. A key ingredient is the error bound property from Section \ref{Sec:ErrorBounds} which allows us to estimate the distance from $(x^k,\lambda^k)$ to $(\bar{x},\bar{\lambda})$ by using the function $\sigma$ from \eqref{Eq:KKTResidual}.

\begin{lem}\label{Lem:ConvSOSC}
    Let Assumption \ref{Asm:Subproblems} hold and let $(\bar{x},\bar{\lambda})$ be a KKT point satisfying the error bound \eqref{Eq:ErrorBound}. Then there is an $r>0$ such that, if $x^k\in B_r(\bar{x})$ for all $k$ and $d_K(g(x^k))\to 0$, then $(x^k,\lambda^k)\to (\bar{x},\bar{\lambda})$.
\end{lem}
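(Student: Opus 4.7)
My plan is to apply the primal-dual error bound \eqref{Eq:ErrorBound} to the iterates $(x^k,\lambda^k)$. Choosing $r$ no larger than the radius of the neighborhood of $\bar{x}$ on which \eqref{Eq:ErrorBound} is valid, the hypothesis $x^k\in B_r(\bar{x})$ immediately puts us inside that neighborhood, so it only remains to verify that $\sigma(x^k,\lambda^k)\to 0$; the conclusion $(x^k,\lambda^k)\to(\bar{x},\bar{\lambda})$ then follows from the estimate $\|x^k-\bar{x}\|_X+\|\lambda^k-\bar{\lambda}\|_H\le c\,\sigma(x^k,\lambda^k)$.

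The first summand of $\sigma(x^k,\lambda^k)$ is immediate: the multiplier update \eqref{Eq:MultUpdate} gives the identity $\Lag_{\rho_{k-1}}(x^k,\BddMul^{k-1})=\Lag(x^k,\lambda^k)$, whence Assumption \ref{Asm:Subproblems} yields $\|\Lag(x^k,\lambda^k)\|_{X^*}\le\varepsilon_k\to 0$. For the feasibility-complementarity term $\|g(x^k)-P_K(g(x^k)+\lambda^k)\|_H$ I would reuse the nonexpansive-projection argument from the proof of Theorem~\ref{Thm:ErrorBoundEquivalence}. Setting $s^k:=P_K(g(x^k)+\BddMul^{k-1}/\rho_{k-1})$, the projection characterization $\lambda^k\in\NorCone{K}{s^k}$ implies $s^k=P_K(s^k+\lambda^k)$, and applying the nonexpansive mapping $y\mapsto y-P_K(y+\lambda^k)$ at $g(x^k)$ and at $s^k$ therefore gives
\[
\|g(x^k)-P_K(g(x^k)+\lambda^k)\|_H\;\le\;\|g(x^k)-s^k\|_H\;=\;\rho_{k-1}^{-1}\|\lambda^k-\BddMul^{k-1}\|_H,
\]
where the equality is just a restatement of \eqref{Eq:MultUpdate}.

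To see that this bound vanishes, I would split on the behavior of $\{\rho_k\}$. If $\rho_k\to\infty$, then nonexpansiveness of $P_K$ yields $\|g(x^k)-s^k\|_H\le d_K(g(x^k))+\|\BddMul^{k-1}\|_H/\rho_{k-1}$, and both terms tend to zero by the standing hypothesis $d_K(g(x^k))\to 0$ together with the boundedness of $B\supseteq\{\BddMul^k\}$. If $\{\rho_k\}$ is bounded, then since it can only change by the discrete factor $\gamma>1$, it must be eventually constant; hence \eqref{Eq:RhoTest} holds from some index onward, so $V(x^{k+1},\BddMul^k,\rho_k)$ decays geometrically to zero, and its second summand is precisely $\rho_k^{-1}\|\lambda^{k+1}-\BddMul^k\|_H$.

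Combining these estimates yields $\sigma(x^k,\lambda^k)\to 0$, after which the error bound finishes the argument. The main difficulty is the second summand of $\sigma$: a crude bound such as $\|g(x^k)-P_K(g(x^k)+\lambda^k)\|_H\le d_K(g(x^k))+\|\lambda^k\|_H$ would leave an uncontrolled dependence on the potentially unbounded multiplier $\lambda^k$. The nonexpansive-projection trick is what trades this off against the algorithm-generated quantity $\|g(x^k)-s^k\|_H$, and after this reduction the case split on $\{\rho_k\}$ is essentially routine.
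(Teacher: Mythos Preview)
Your proposal is correct and follows essentially the same route as the paper: reduce to showing $\sigma(x^k,\lambda^k)\to 0$, handle the Lagrangian term via Assumption~\ref{Asm:Subproblems} and the identity $\Lag_{\rho_{k-1}}(x^k,\BddMul^{k-1})=\Lag(x^k,\lambda^k)$, then bound the feasibility--complementarity term by $\|g(x^k)-s^k\|_H$ using the nonexpansiveness of $y\mapsto y-P_K(y+\lambda^k)$ together with $\lambda^k\in\NorCone{K}{s^k}$, and finally dispose of $\|g(x^k)-s^k\|_H$ by a case split on the boundedness of $\{\rho_k\}$. The only differences are cosmetic (indexing $k$ versus $k+1$, and you spell out a bit more explicitly why bounded $\{\rho_k\}$ forces geometric decay of $V$).
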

\begin{proof}
    By Assumption \ref{Asm:Subproblems}, we have $\Lag(x^{k+1},\lambda^{k+1})=\Lag_{\rho_k}(x^{k+1},\BddMul^k)\to 0$. Hence, in view of the error bound property, it suffices to show that $g(x^{k+1})-P_K(g(x^{k+1})+\lambda^{k+1})\to 0$. To this end, define the sequence $s^{k+1}:=P_K(g(x^{k+1})+\BddMul^k/\rho_k)$. Then $s^{k+1}\in K$ and, as noted before, $\lambda^{k+1}\in\NorCone{K}{s^{k+1}}$ for all $k$. We now use the fact that $y\mapsto y-P_K(y+\lambda^{k+1})$ is nonexpansive, which is an easy consequence of \cite[Cor.\ 4.10]{Bauschke2011}. Therefore, the inverse triangle inequality yields
    \begin{equation}\label{Eq:LemConvSOSC1}
    \begin{aligned}
        \lefteqn{\|g(x^{k+1})-P_K(g(x^{k+1})+\lambda^{k+1})\|_H} & \\
        & \le \|g(x^{k+1})-s^{k+1}\|_H+\|s^{k+1}-P_K(s^{k+1}+\lambda^{k+1})\|_H.
    \end{aligned}
    \end{equation}
    The last term is equal to zero since $\lambda^{k+1}\in\NorCone{K}{s^{k+1}}$, cf.\ \cite[Cor.\ 6.46]{Bauschke2011}. Hence, to complete the proof, we only need to show that $\|s^{k+1}-g(x^{k+1})\|_H\to 0$. If $\{\rho_k\}$ is bounded, then this readily follows from the penalty updating scheme \eqref{Eq:RhoTest}. On the other hand, if $\rho_k\to\infty$, then
    \begin{equation*}
        \|s^{k+1}-g(x^{k+1})\|_H \le \|s^{k+1}-P_K(g(x^{k+1}))\|_H + d_K(g(x^{k+1}))\to 0,
    \end{equation*}
    where we used the nonexpansiveness of the projection operator.
\end{proof}

\noindent
The above lemma gives us some information about the behavior of zeros of the augmented Lagrangian in a neighborhood of $\bar{x}$. Note that the assumption $d_K(g(x^{k+1}))\to 0$ asserts that the iterates become (asymptotically) feasible and is often satisfied in practice, see also Lemma \ref{Lem:Feasibility}. For the remaining analysis, we now make the following assumption.

\begin{asm}\label{Asm:LocalConv}
    We assume that $(\bar{x},\bar{\lambda})$ is a KKT point of the VI which satisfies the local error bound \eqref{Eq:ErrorBound}. Moreover, the sequence $\{(x^k,\lambda^k)\}$ from Algorithm \ref{Alg:ALM} converges strongly to $(\bar{x},\bar{\lambda})$, and we have $\BddMul^k=\lambda^k$ for all $k$ sufficiently large.
\end{asm}

\noindent
One of the above assumptions which might require some elaboration is $\BddMul^k=\lambda^k$ for all $k$. The boundedness of $\{\BddMul^k\}$ is key to establishing global convergence of the algorithm, see Section \ref{Sec:GlobalConv}. Since $\lambda^k\to\bar{\lambda}$ in our setting, we do not need to force boundedness of $\{\BddMul^k\}$ and can simply set $\BddMul^k:=\lambda^k$ for all $k$. (In the context of Algorithm \ref{Alg:ALM}, we formally need to choose the bounded set $B$ sufficiently large to allow this.)

We will now prove convergence rates for the primal-dual sequence $\{(x^k,\lambda^k)\}$. Since the distance of $(x^k,\lambda^k)$ to $(\bar{x},\bar{\lambda})$ admits both upper and lower estimates relative to the residual terms $\sigma_k:=\sigma(x^k,\lambda^k)$ by \eqref{Eq:DoubleErrorBound}, we will largely base our analysis on the sequence $\{\sigma_k\}$, and the results on the primal-dual sequence $\{(x^k,\lambda^k)\}$ will follow directly.

\begin{lem}\label{Lem:SigmaConv}
    Let Assumptions \ref{Asm:Subproblems}, \ref{Asm:LocalConv} hold, and let $\sigma_k:=\sigma(x^k,\lambda^k)$. Then there is a constant $c_1>0$ such that
    \begin{equation*}
        \mleft(1-\frac{c_1}{\rho_k}\mright) \sigma_{k+1}
        \le \varepsilon_{k+1} + \frac{c_1}{\rho_k}\sigma_k
    \end{equation*}
    for all $k\in\N$ sufficiently large.
\end{lem}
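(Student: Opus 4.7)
The plan is to expand $\sigma_{k+1}=\|\mathcal{L}(x^{k+1},\lambda^{k+1})\|_{X^*}+\|g(x^{k+1})-P_K(g(x^{k+1})+\lambda^{k+1})\|_H$ and bound the two summands separately, in each case using the multiplier update \eqref{Eq:MultUpdate} together with the fact that $\BddMul^k=\lambda^k$ for large $k$ by Assumption \ref{Asm:LocalConv}.

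For the first summand, observe that the update formula for $\lambda^{k+1}$ together with the definition of $\Lag_\rho$ gives the identity $\Lag(x^{k+1},\lambda^{k+1})=\Lag_{\rho_k}(x^{k+1},\BddMul^k)$, so Assumption \ref{Asm:Subproblems} directly yields $\|\Lag(x^{k+1},\lambda^{k+1})\|_{X^*}\le\varepsilon_{k+1}$. For the second summand, I would introduce the auxiliary point $s^{k+1}:=P_K(g(x^{k+1})+\BddMul^k/\rho_k)$, exactly as in the proof of Lemma \ref{Lem:ConvSOSC}. By construction $\lambda^{k+1}\in\NorCone{K}{s^{k+1}}$, so $s^{k+1}=P_K(s^{k+1}+\lambda^{k+1})$; combined with the nonexpansiveness of $y\mapsto y-P_K(y+\lambda^{k+1})$ (cf.\ the argument in Theorem \ref{Thm:ErrorBoundEquivalence} and Lemma \ref{Lem:ConvSOSC}), this yields
\begin{equation*}
    \|g(x^{k+1})-P_K(g(x^{k+1})+\lambda^{k+1})\|_H\le \|g(x^{k+1})-s^{k+1}\|_H.
\end{equation*}
Finally, rearranging \eqref{Eq:MultUpdate} with $\BddMul^k=\lambda^k$ gives $g(x^{k+1})-s^{k+1}=(\lambda^{k+1}-\lambda^k)/\rho_k$, so this summand is at most $\|\lambda^{k+1}-\lambda^k\|_H/\rho_k$.

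Putting the two bounds together yields $\sigma_{k+1}\le\varepsilon_{k+1}+\|\lambda^{k+1}-\lambda^k\|_H/\rho_k$. Now I apply the upper inequality of the two-sided error bound \eqref{Eq:DoubleErrorBound} (which holds for both $(x^k,\lambda^k)$ and $(x^{k+1},\lambda^{k+1})$ since both lie near $(\bar x,\bar\lambda)$ by Assumption \ref{Asm:LocalConv}, so $\sigma_k,\sigma_{k+1}$ are small) to estimate
\begin{equation*}
    \|\lambda^{k+1}-\lambda^k\|_H\le \|\lambda^{k+1}-\bar\lambda\|_H+\|\lambda^k-\bar\lambda\|_H\le c_2\sigma_{k+1}+c_2\sigma_k,
\end{equation*}
with $c_2$ the constant from \eqref{Eq:DoubleErrorBound}. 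Substituting this and moving the $\sigma_{k+1}$-term to the left-hand side gives
\begin{equation*}
    \mleft(1-\frac{c_2}{\rho_k}\mright)\sigma_{k+1}\le \varepsilon_{k+1}+\frac{c_2}{\rho_k}\sigma_k,
\end{equation*}
which is the claim with $c_1:=c_2$.

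No step is truly hard, but the one that requires the most care is the bound on the feasibility/complementarity residual: one has to notice that the slack variable $s^{k+1}$ built from $\BddMul^k=\lambda^k$ produces the exact normal-cone inclusion that collapses the nonexpansiveness estimate to a single term, and then that the remaining difference $g(x^{k+1})-s^{k+1}$ is precisely $(\lambda^{k+1}-\lambda^k)/\rho_k$, so that the $1/\rho_k$ factor needed in the conclusion appears naturally.
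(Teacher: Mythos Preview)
Your proposal is correct and follows essentially the same route as the paper: the identity $\Lag(x^{k+1},\lambda^{k+1})=\Lag_{\rho_k}(x^{k+1},\BddMul^k)$ handles the first summand, the auxiliary point $s^{k+1}$ together with nonexpansiveness of $y\mapsto y-P_K(y+\lambda^{k+1})$ reduces the second summand to $\|\lambda^{k+1}-\lambda^k\|_H/\rho_k$, and the error bound \eqref{Eq:ErrorBound} (equivalently the upper half of \eqref{Eq:DoubleErrorBound}) closes the argument. The paper's proof is organized identically, citing \eqref{Eq:LemConvSOSC1} from Lemma~\ref{Lem:ConvSOSC} for the nonexpansiveness step and invoking the error bound constant directly rather than through \eqref{Eq:DoubleErrorBound}.
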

\begin{proof}
    Observe that $\Lag_{\rho_k}(x^{k+1},\BddMul^k)=\Lag(x^{k+1},\lambda^{k+1})$ for all $k$. By Assumption \ref{Asm:Subproblems} and the definition of $\sigma_k$, we therefore have
    \begin{equation}\label{Eq:LemSigmaConv1}
        \sigma_{k+1}\le \varepsilon_{k+1}+\|g(x^{k+1})-P_K(g(x^{k+1})+\lambda^{k+1})\|_H.
    \end{equation}
    Now, let $k\in\N$ be large enough so that $\BddMul^k=\lambda^k$. Consider again the sequence $s^{k+1}:=P_K(g(x^{k+1})+\lambda^k/\rho_k)$. Using \eqref{Eq:LemConvSOSC1}, we see that
    \begin{equation}\label{Eq:LemSigmaConv2}
        \|g(x^{k+1})-P_K(g(x^{k+1})+\lambda^{k+1})\|_H \le
        \|g(x^{k+1})-s^{k+1}\|_H = \frac{\|\lambda^{k+1}-\lambda^k\|_H}{\rho_k}.
    \end{equation}
    Inserting this into \eqref{Eq:LemSigmaConv1} and using the triangle inequality yields
    \begin{equation*}
        \sigma_{k+1}\le \varepsilon_{k+1}+\frac{1}{\rho_k}
        \bigl(\|\lambda^{k+1}-\bar{\lambda}\|_H+\|\lambda^k-\bar{\lambda}\|_H\bigr).
    \end{equation*}
    Now, by Assumption \ref{Asm:LocalConv} and since $x^k\to\bar{x}$, there is a $c_1>0$ such that $\|\lambda^k-\bar{\lambda}\|_H\le c_1 \sigma_k$ for all $k\in\N$ sufficiently large. Hence,
    \begin{equation*}
        \sigma_{k+1}\le \varepsilon_{k+1}+\frac{c_1}{\rho_k}\sigma_{k+1}
        +\frac{c_1}{\rho_k}\sigma_k,
    \end{equation*}
    again for $k\in\N$ sufficiently large. Reordering gives the desired result.
\end{proof}

\noindent
With the above lemma, it is easy to deduce convergence rates for the primal-dual sequence $\{(x^k,\lambda^k)\}$.

\begin{thm}\label{Thm:ConvRate}
    Let Assumptions \ref{Asm:Subproblems}, \ref{Asm:LocalConv} hold, and let $\varepsilon_{k+1}=o(\sigma_k)$. Then:
    \begin{enumerate}[label=\textnormal{(\alph*)}]
        \item For every $q\in(0,1)$, there is a $\bar{\rho}_q>0$ such that, if $\rho_k\ge \bar{\rho}_q$ for sufficiently large $k$, then $(x^k,\lambda^k)\to (\bar{x},\bar{\lambda})$ Q-linearly with rate $q$.
        \item The sequence of penalty parameters $\{\rho_k\}$ remains bounded.
    \end{enumerate}
\end{thm}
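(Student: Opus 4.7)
The strategy is to treat the residual sequence $\sigma_k := \sigma(x^k,\lambda^k)$ as the primary object: Lemma~\ref{Lem:SigmaConv} provides a one-step recursion for $\{\sigma_k\}$ driven by $\rho_k$, and the double error bound \eqref{Eq:DoubleErrorBound} will then transfer a Q-linear rate on $\{\sigma_k\}$ to the primal-dual sequence. For part~(b), I will derive the test \eqref{Eq:RhoTest} by squeezing $V(x^{k+1},\BddMul^k,\rho_k)$ from above and $V(x^k,\BddMul^{k-1},\rho_{k-1})$ from below, both in terms of $\{\sigma_k\}$, and then invoke part~(a) to conclude that the ratio tends to zero so that the penalty test holds eventually.

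For part~(a), I would start from Lemma~\ref{Lem:SigmaConv} and rewrite the inequality there as
\begin{equation*}
    \sigma_{k+1} \;\le\; \frac{\varepsilon_{k+1}}{1-c_1/\rho_k} \;+\; \frac{c_1/\rho_k}{1-c_1/\rho_k}\,\sigma_k,
\end{equation*}
valid once $\rho_k>c_1$. Given $q\in(0,1)$, define $q':=q\,c_1/c_2$ with $c_1,c_2$ the constants from \eqref{Eq:DoubleErrorBound}, and choose $\bar{\rho}_q$ so large that the multiplicative factor in front of $\sigma_k$ is bounded by $q'/2$. Since $\varepsilon_{k+1}=o(\sigma_k)$, the additive term is dominated by $(q'/2)\sigma_k$ for all $k$ large enough, giving $\sigma_{k+1}\le q'\sigma_k$. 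Applying the upper bound $\|x^{k+1}-\bar{x}\|_X+\|\lambda^{k+1}-\bar{\lambda}\|_H\le c_2\sigma_{k+1}$ and then the lower bound $c_1\sigma_k\le\|x^k-\bar{x}\|_X+\|\lambda^k-\bar{\lambda}\|_H$ yields
\begin{equation*}
    \|x^{k+1}-\bar{x}\|_X+\|\lambda^{k+1}-\bar{\lambda}\|_H
    \;\le\; \frac{c_2 q'}{c_1}\bigl(\|x^k-\bar{x}\|_X+\|\lambda^k-\bar{\lambda}\|_H\bigr)
    \;=\; q\bigl(\|x^k-\bar{x}\|_X+\|\lambda^k-\bar{\lambda}\|_H\bigr),
\end{equation*}
which is exactly Q-linear convergence with rate $q$.

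For part~(b), I would argue by contradiction and assume $\rho_k\to\infty$. Fix $q\in(0,\tau)$; then for all sufficiently large $k$, $\rho_k\ge\bar{\rho}_q$ and hence $\sigma_{k+1}\le q\sigma_k$ by the analysis in~(a). Using $\BddMul^k=\lambda^k$ (Assumption~\ref{Asm:LocalConv}) together with the identity $\|g(x^{k+1})-P_K(g(x^{k+1})+\lambda^k/\rho_k)\|_H=\|\lambda^{k+1}-\lambda^k\|_H/\rho_k$ (which falls out of the multiplier update \eqref{Eq:MultUpdate}) and $\Lag_{\rho_k}(x^{k+1},\lambda^k)=\Lag(x^{k+1},\lambda^{k+1})$, one obtains
\begin{equation*}
    V(x^{k+1},\lambda^k,\rho_k) \;\le\; \varepsilon_{k+1} + \frac{\|\lambda^{k+1}-\lambda^k\|_H}{\rho_k}
    \;\le\; \varepsilon_{k+1} + \frac{c_2(\sigma_k+\sigma_{k+1})}{\rho_k}.
\end{equation*}
The complementary lower bound $\sigma_k\le V(x^k,\lambda^{k-1},\rho_{k-1})$ is essentially already contained in the proof of Lemma~\ref{Lem:SigmaConv}, since $\Lag(x^k,\lambda^k)=\Lag_{\rho_{k-1}}(x^k,\lambda^{k-1})$ and the feasibility-complementarity part of $\sigma_k$ is dominated by $\|\lambda^k-\lambda^{k-1}\|_H/\rho_{k-1}$ (same nonexpansiveness argument, shifted one index). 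Combining these estimates,
\begin{equation*}
    \frac{V(x^{k+1},\lambda^k,\rho_k)}{V(x^k,\lambda^{k-1},\rho_{k-1})}
    \;\le\; \frac{\varepsilon_{k+1}}{\sigma_k} + \frac{c_2(1+q)}{\rho_k} \;\longrightarrow\; 0,
\end{equation*}
so the test \eqref{Eq:RhoTest} is met eventually and $\rho_{k+1}=\rho_k$ for all large $k$, contradicting $\rho_k\to\infty$.

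\textbf{Expected main obstacle.} The one calculation that needs care is the lower estimate $\sigma_k\le V(x^k,\lambda^{k-1},\rho_{k-1})$, because $V$ contains the projection with shift $\lambda^{k-1}/\rho_{k-1}$ while $\sigma$ uses the shift $\lambda^k$; this requires reusing the nonexpansiveness trick from \eqref{Eq:LemConvSOSC1}. Apart from that, everything reduces to manipulating Lemma~\ref{Lem:SigmaConv} and feeding into \eqref{Eq:DoubleErrorBound}.
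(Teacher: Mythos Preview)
Your proposal is correct and follows essentially the same route as the paper: you manipulate the recursion of Lemma~\ref{Lem:SigmaConv} to get a Q-linear rate on $\{\sigma_k\}$ and transfer it via \eqref{Eq:DoubleErrorBound}, and for (b) you sandwich $V_{k+1}/V_k$ using $V_k\ge\sigma_k$ (from the nonexpansiveness argument \eqref{Eq:LemConvSOSC1}) and $V_{k+1}\le \varepsilon_{k+1}+c(\sigma_k+\sigma_{k+1})/\rho_k$, exactly as the paper does. The only cosmetic difference is that you invoke part~(a) to bound $\sigma_{k+1}/\sigma_k$ in the final ratio, whereas the paper uses \eqref{Eq:ThmConvRate1} directly; both yield $V_{k+1}/V_k\to 0$ under $\rho_k\to\infty$.
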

\begin{proof}
    Let $k\in\N$ be sufficiently large so that $\BddMul^k=\lambda^k$. By Lemma \ref{Lem:SigmaConv}, if $\rho_k$ is large enough so that $1-c_1/\rho_k>0$, then
    \begin{equation}\label{Eq:ThmConvRate1}
        \frac{\sigma_{k+1}}{\sigma_k}\le \frac{c_1}{\rho_k-c_1}+o(1).
    \end{equation}
    Using \eqref{Eq:ErrorBound} and the local Lipschitz-continuity of $\sigma$ (e.g.\ equation \eqref{Eq:DoubleErrorBound}), it is easy to derive (a). For (b), let us again consider the sequence $s^{k+1}=P_K(g(x^{k+1})+\lambda^k/\rho_k)$, and define $V_{k+1}:=V(x^{k+1},\BddMul^k,\rho_k)=\|\Lag_{\rho_k}(x^{k+1},\BddMul^k)\|_{X^*}+\|g(x^{k+1})-s^{k+1}\|_H$. To prove boundedness of $\{\rho_k\}$, we need to show that $V_{k+1}\le\tau V_k$ for sufficiently large $k$. Using \eqref{Eq:LemSigmaConv2} and $\Lag_{\rho_k}(x^{k+1},\BddMul^k)=\Lag(x^{k+1},\lambda^{k+1})$, we obtain
    \begin{equation*}
        V_{k+1}\ge \|\Lag_{\rho_k}(x^{k+1},\BddMul^k)\|_{X^*} +
        \|g(x^{k+1})-P_K(g(x^{k+1})+\lambda^{k+1})\|_H=\sigma_{k+1}
    \end{equation*}
    for all $k\in\N$ and, from \eqref{Eq:LemSigmaConv2} and Assumption \ref{Asm:Subproblems},
    \begin{align*}
        V_{k+1} =\|\Lag_{\rho_k}(x^{k+1},\BddMul^k)\|_{X^*}+
        \frac{\|\lambda^{k+1}-\lambda^k\|_H}{\rho_k} 
        & \le \varepsilon_{k+1} + \frac{\|\lambda^{k+1}-\bar{\lambda}\|_H+
            \|\lambda^k-\bar{\lambda}\|_H}{\rho_k} \\
        & \le \varepsilon_{k+1} + \frac{c}{\rho_k}(\sigma_{k+1}+\sigma_k)
    \end{align*}
    for all $k\in\N$ sufficiently large, where $c$ is the constant from \eqref{Eq:ErrorBound} (recall that $x^k\to\bar{x}$). Putting these inequalities together yields
    \begin{equation*}
        \frac{V_{k+1}}{V_k}\le \frac{\varepsilon_{k+1}}{\sigma_k}+\frac{c}{\rho_k}
        \frac{\sigma_{k+1}+\sigma_k}{\sigma_k}=\frac{\varepsilon_{k+1}}{\sigma_k}
        +\frac{c}{\rho_k}\mleft( 1+\frac{\sigma_{k+1}}{\sigma_k} \mright).
    \end{equation*}
    If we now assume that $\rho_k\to\infty$, then it is easy to deduce from \eqref{Eq:ThmConvRate1} and $\varepsilon_{k+1}=o(\sigma_k)$ that $V_{k+1}/V_k\to 0$. Hence, $V_{k+1}/V_k\le\tau$ for all $k$ sufficiently large, which contradicts the assumption that $\rho_k\to\infty$.
\end{proof}

\noindent
The assumption $\varepsilon_{k+1}=o(\sigma_k)$ in the above theorem says that, roughly speaking, the degree of inexactness should be small enough to not affect the rate of convergence. Note that we are comparing $\varepsilon_{k+1}$ to the optimality measure $\sigma_k$ of the previous iterates $(x^k,\lambda^k)$. Hence, it is easy to ensure this condition in practice, for instance, by always computing the next iterate $x^{k+1}$ with a precision $\varepsilon_{k+1}\le z_k \sigma_k$ for some fixed null sequence $z_k$.

Let us also note that one can easily adapt the proof of Theorem \ref{Thm:ConvRate}(a) to conclude that $(x^k,\lambda^k)\to (\bar{x},\bar{\lambda})$ Q-superlinearly if $\rho_k\to\infty$. However, the resulting assertion would be redundant because part (b) of the theorem actually implies the boundedness of $\{\rho_k\}$. On the other hand, the proof of (b) uses the specific penalty updating scheme \eqref{Eq:RhoTest} with the function $V$ from \eqref{Eq:V}, whereas the proof of (a) does not depend on the penalty updating rule at all. If we replace $V$ by the function
\begin{equation*}
    \tilde{V}(x,\lambda,\rho):=\left\|g(x)-P_K\mleft(g(x)+
    \frac{\lambda}{\rho}\mright)\right\|_H
\end{equation*}
(which is just the second term from the definition of $V$), it is rather easy to see that the assertions of Lemmas \ref{Lem:ConvSOSC}, \ref{Lem:SigmaConv} and Theorem \ref{Thm:ConvRate}(a) remain true. Additionally, we obtain superlinear convergence if $\rho_k\to\infty$, but we do not get boundedness of $\{\rho_k\}$.

Let us close this section by mentioning two special cases for which different or stronger rate of convergence results can be obtained. The first case is that of convex optimization. In this case, the augmented Lagrangian algorithm is essentially equivalent to a proximal-point method (applied to the dual problem), and this duality can be used to establish certain rate of convergence results, see \cite{Dong2015,Guler1991,Kanzow2017b,Rockafellar1976}.

The second special case, which was already mentioned in the introduction, is that of nonlinear programming-type (NLP) constraints. Here, it is possible to prove local linear convergence under SOSC only \cite{Fernandez2012}. Constraint qualifications are not needed since the set $K$ is polyhedral, see the discussion in the introduction and in \cite[Section~4.4]{Bonnans2000}. However, the techniques used in \cite{Fernandez2012} rely heavily on finite-dimensional arguments and the specific structure of NLP constraints, and thus cannot readily be adapted to our setting.

\section{Applications}\label{Sec:Applic}

This section describes some applications of our method. Recall that our variational setting encompasses constrained optimization problems \eqref{Eq:Opt}. This opens up a broad spectrum of applications, including, as mentioned before, standard nonlinear programming (NLP). However, there already is a plethora of literature on this topic, in particular the recent paper \cite{Fernandez2012}. Moreover, the discussion in Section \ref{Sec:ErrorBounds} indicates that NLP is actually a very confined special case which does not allow us to demonstrate the full generality of our approach. In particular, NLPs are inherently finite-dimensional and the corresponding set $K$ is polyhedral, which is very restrictive.

As a result, we focus on problems in function space settings where the constraint set is almost never polyhedral. This section contains two examples in this direction: we begin with a simple linear-quadratic optimal control problem and then continue with multiobjective optimal control in a Nash equilibrium framework. For both examples, we first present the general problem setting and then explain why the regularity properties from Assumption~\ref{Asm:LocalConv} are satisfied.

To verify our theoretical results in practice, we follow a standard approach by which we discretize the respective problems and then analyze the behavior of the algorithm for increasingly fine levels of discretization. As we shall see, the assertions of the previous section can be verified in both examples, and independently of the dimension $n$, which indicates that our results are valid.

\subsection{An Optimal Control Problem}\label{Sec:ApplicOptCont}

Let $\Omega\subseteq\R^d$, $d\in\{2,3\}$, be a bounded domain. The example presented in this section consists of minimizing
\begin{equation*}
    J(y,u):=\frac{1}{2}\|y-y_d\|_{L^2(\Omega)}^2+\frac{\alpha}{2}\|u\|_{L^2(\Omega)}^2
\end{equation*}
subject to $y\in H_0^1(\Omega)\cap C(\bar{\Omega})$ and $u\in L^2(\Omega)$ satisfying the partial differential equation (PDE) and pointwise control constraints
\begin{equation*}
    -\Delta y=u+f \quad\text{and}\quad u_a\le u\le u_b.
\end{equation*}
Here, $y_d,u_a,u_b\in L^2(\Omega)$ are problem-specific and $\alpha>0$ is a regularization parameter. It is well-known that, for every right-hand side $w\in L^2(\Omega)$, the Poisson equation $-\Delta y=w$ admits a uniquely determined weak solution $y=S w\in H_0^1(\Omega)\cap C(\bar{\Omega})$, and the resulting operator $S:L^2(\Omega)\to H_0^1(\Omega)\cap C(\bar{\Omega})$ is linear and compact \cite[Thm.\ 4.17]{Troeltzsch2010}. Writing $y_u:=S(u+f)$, we can now restate the objective function as
\begin{equation*}
    \bar{J}(u):=J(y_u,u)=\frac{1}{2}\|y_u-y_d\|_{L^2(\Omega)}^2
    +\frac{\alpha}{2}\|u\|_{L^2(\Omega)}^2.
\end{equation*}
This function together with the control constraints $u_a\le u\le u_b$ is typically called the \emph{reduced formulation} of the optimal control problem and directly fits into our variational framework by setting $X:=H:=L^2(\Omega)$, $F(u):=\bar{J}'(u)$, and
\begin{equation*}
    g(u):=u, \quad K:=\{ u\in X: u_a\le u\le u_b \}.
\end{equation*}
Since $\bar{J}$ is strongly convex and $g$ is just the identity mapping on $X=H$, it is easy to show that the above problem admits a unique primal-dual solution, and that both SOSC and SRC hold. Hence, by Theorem \ref{Thm:ErrorBound}, the KKT system is upper Lipschitz stable and the control problem admits a local error bound.

We now present a numerical example which is constructed in such a way that the optimal solution is known analytically. Let $\Omega:=(0,1)^2$ be the unit square and define $\alpha:=1$, $u_a:= -0.5$, $u_b:= 0.5$. Consider the functions
\begin{equation*}
    \bar{y}(x):=\sin (\pi x_1)\sin (\pi x_2), \quad
    \bar{p}(x):=\sin (2\pi x_1)\sin (2\pi x_2),
\end{equation*}
and set $y_d:=\bar{y}+\Delta \bar{p}$. Now, using $\bar{u}:=P_{[u_a,u_b]}(-\bar{p}/\alpha)$ and $f:=-\Delta \bar{y}-\bar{u}$, it is easy to see that $\bar{u}$ is a solution to the problem. Moreover, $\bar{y}$ is the corresponding state, $\bar{p}$ the so-called adjoint state \cite{Troeltzsch2010}, and the Lagrange multiplier is given by $\bar{\lambda}:=-\bar{p}-\alpha \bar{u}$.

For the numerical testing, we discretized the problem by means of a uniform grid with $n\in\N$ interior points per row or column (i.e., $n^2$ points in total) and approximated the Laplace operator by a standard five-point finite difference scheme. It is easy to argue that the resulting discretized versions of $\bar{J}$ and $g$ again satisfy the (now finite-dimensional) SOSC and SRC assumptions (since $\bar{J}$ is strongly convex and $g$ is the identity mapping). Hence, we can expect locally fast convergence of the augmented Lagrangian method, both from a continuous and a discrete point of view.

The implementation of the algorithm was done in MATLAB\textsuperscript{\textregistered} and uses the parameters
\begin{equation*}
    (u^0,\lambda^0):=(0,0), \quad B:=[-10^6,10^6]^{n^2}, \quad
    \rho_0:=1, \quad \gamma:=10, \quad \tau:=0.5,
\end{equation*}
together with the formula $\BddMul^k:=P_B(\lambda^k)$ for the safeguarded multipliers (see the discussion in Section \ref{Sec:Method}). Moreover, we use the termination criteria $\sigma(x,\lambda)\le 10^{-8}$ and $\|\Lag_{\rho_k}(x,\BddMul^k)\|\le 10^{-10}$ for the outer and inner iterations, respectively, where the norm is the discrete $L^2$-norm. The subproblems are nonlinear equations which we solve with a standard semismooth Newton method. It should be noted that, while the discrete Laplacian is a sparse matrix, the solution operator $S$ which occurs in the function $F$ is nearly dense. To circumvent this issue, we use a sparse Cholesky factorization of the negative Laplacian to obtain an ``implicit'' form of $S$ and solve the Newton equations with the MATLAB\textsuperscript{\textregistered} conjugate gradient method \texttt{pcg}.

\begin{table}\centering
\begin{tabular}{|r|ccc|ccc|ccc|}\hline
    & \multicolumn{3}{|c|}{$n=64$} & \multicolumn{3}{|c|}{$n=256$} & \multicolumn{3}{|c|}{$n=1024$} \\
    $k$ & $\rho_k$ & $\sigma_k$ & $\operatorname{dist}_k$ &
    $\rho_k$ & $\sigma_k$ & $\operatorname{dist}_k$ &
    $\rho_k$ & $\sigma_k$ & $\operatorname{dist}_k$ \\ \hline

     0 &  1 & 5.08e-01 & 5.43e-01 &  1 & 5.02e-01 & 5.37e-01 &  1 & 5.01e-01 & 5.35e-01 \\
     1 &  1 & 8.58e-02 & 1.71e-01 &  1 & 8.47e-02 & 1.69e-01 &  1 & 8.44e-02 & 1.69e-01 \\
     2 &  1 & 4.29e-02 & 8.55e-02 &  1 & 4.23e-02 & 8.46e-02 &  1 & 4.22e-02 & 8.44e-02 \\
     3 & 10 & 2.15e-02 & 4.26e-02 & 10 & 2.12e-02 & 4.23e-02 & 10 & 2.11e-02 & 4.22e-02 \\
     4 & 10 & 1.95e-03 & 3.57e-03 & 10 & 1.92e-03 & 3.83e-03 & 10 & 1.92e-03 & 3.84e-03 \\
     5 & 10 & 1.77e-04 & 4.44e-04 & 10 & 1.75e-04 & 3.29e-04 & 10 & 1.74e-04 & 3.48e-04 \\
     6 & 10 & 1.61e-05 & 5.08e-04 & 10 & 1.59e-05 & 2.85e-05 & 10 & 1.59e-05 & 3.04e-05 \\
     7 & 10 & 1.47e-06 & 5.21e-04 & 10 & 1.45e-06 & 3.18e-05 & 10 & 1.44e-06 & 2.08e-06 \\
     8 & 10 & 1.33e-07 & 5.22e-04 & 10 & 1.31e-07 & 3.29e-05 & 10 & 1.31e-07 & 1.96e-06 \\
     9 & 10 & 1.21e-08 & 5.22e-04 & 10 & 1.20e-08 & 3.30e-05 & 10 & 1.19e-08 & 2.06e-06 \\
    10 & 10 & 1.10e-09 & 5.22e-04 & 10 & 1.09e-09 & 3.30e-05 & 10 & 1.08e-09 & 2.07e-06 \\

    \hline
\end{tabular}
\caption{Numerical results for the optimal control problem.}
\label{Tab:OptCont}
\end{table}

Table \ref{Tab:OptCont} lists some numerical results for different values of $n$, where each line contains the penalty parameter $\rho_k$, the optimality measure $\sigma_k$ and the distance $\operatorname{dist}_k$ of $(u^k,\lambda^k)$ to $(\bar{u},\bar{\lambda})$. The results suggest that the algorithm works very well for this problem; in particular, the number of required iterations remains constant as $n$ increases. Moreover, we also observe that the rate of convergence appears to be proportional to $1/\rho_k$, as suggested by the theory. It should be noted, however, that the distances $\operatorname{dist}_k$ stop decreasing after a certain point because of the inexactness induced by the discretization; in particular, if we discretize the (known) optimal solution pair $(\bar{u},\bar{\lambda})$, we do not obtain an \emph{exact} solution of the discretized problem. This phenomenon is also evidenced by the fact that the ``limit'' value of $\operatorname{dist}_k$ decreases as $n$ increases.

We close this section with an important remark on the analytical representation of the feasible set. This observation is crucial and was in fact one of our main motivations to consider constraint sets $K$ which are not necessarily cones.

\begin{rem}\label{Rem:BoxConstraints}
It is important that we define the constraint system with $g$ and $K$ as above. Indeed, the alternative formulation of the box constraints as $\hat{g}(u)\in \hat{K}$ with
\begin{equation*}
    \hat{g}(u):=(u-u_a, u_b-u),
    \quad \hat{K}:=\{(v,w)\in L^2(\Omega)^2: v,w\ge 0 \},
\end{equation*}
may seem advantageous at first glance (since $\hat{K}$ is a closed convex cone, whereas $K$ is not). However, in this formulation, the strict Robinson condition is not satisfied. In fact, the function $\hat{g}$ does not even satisfy the standard Robinson constraint qualification (RCQ) \cite{Bonnans2000} or the equivalent regularity condition of Zowe and Kurcyusz \cite{Zowe1979}. We refer the reader to \cite{Troeltzsch2010} for a formal proof; an alternative way to verify this irregularity is to note that if RCQ holds, then it remains stable under small perturbations of the constraint function \cite{Bonnans2000}. However, even if $u_a$ and $u_b$ are ``well separated'', it is fairly easy to construct small perturbations (in the sense of $L^2$) which make the lower and upper bounds coincide on some set of positive measure. If this happens, then the set of Lagrange multipliers corresponding to a local minimum is unbounded, and RCQ is violated.
\end{rem}

\subsection{Optimal Control in a Nash Equilibrium Framework}

We now present a generalization of the optimal control problem from the previous section by considering it in a multi-player framework \cite{Borzi2013,Dreves2016,Kanzow2017a}. The result is a Nash equilibrium problem (NEP) of two players with control variables $u_1,u_2\in L^2(\Omega)$ and a state variable $y\in H_0^1(\Omega)\cap C(\bar{\Omega})$, where $\Omega\subseteq\R^d$, $d\in\{2,3\}$, is again a bounded domain. Similarly to before, each player attempts to minimize the objective function
\begin{equation*}
    J_i(y,u_i):=\frac{1}{2}\|y-y_d^i\|_{L^2(\Omega)}^2+
    \frac{\alpha_i}{2}\|u_i\|_{L^2(\Omega)}^2
\end{equation*}
with respect to $u_i$, subject to the partial differential equation $-\Delta y=u_1+u_2+f$ and the pointwise control constraints $a_i \le u_i \le b_i$ with $a_i,b_i\in L^2(\Omega)$. The remaining problem parameters satisfy $\alpha_i>0$ and $y_d^i\in L^2(\Omega)$ for all $i$. As in Section \ref{Sec:ApplicOptCont}, we can use the compact linear solution operator $S:L^2(\Omega)\to H_0^1(\Omega)\cap C(\bar{\Omega})$ and the resulting control-to-state mapping $y_u:=S(u_1+u_2+f)$ to transform the objective functions to
\begin{equation*}
    \bar{J}_i(u):=J_i(y_u,u_i)
    =\frac{1}{2}\|y_u-y_d^i\|_{L^2(\Omega)}^2
    +\frac{\alpha_i}{2}\|u_i\|_{L^2(\Omega)}^2,
\end{equation*}
where $u:=(u_1,u_2)$. To establish the connection with our variational problem \eqref{Eq:VI}, \eqref{Eq:M}, we only need to make some definitions and use the well-known correspondence between NEPs and VIs \cite{Facchinei2007,Kanzow2017a}. Define $X:=H:=L^2(\Omega)^2$, $F(u):=\bigl(D_{u_1} \bar{J}_1(u),D_{u_2} \bar{J}_2(u)\bigr)$, and
\begin{equation*}
    g(u_1,u_2):=(u_1,u_2), \quad
    K:=\{(u_1,u_2)\in X: a_i\le u_i\le b_i \}.
\end{equation*}
Then it is easy to see that the NEP is equivalent to the VI \eqref{Eq:VI} (and \eqref{Eq:VI_Convex}, since the feasible set is convex). The existence of a solution of the NEP (and of the VI) can be shown as in \cite{Borzi2013}; moreover, since $g$ is the identity operator on $X=H$, SRC holds and the problem admits a unique Lagrange multiplier. Finally, an easy calculation shows that
\begin{equation*}
    F'(u)=
    \begin{pmatrix}
        S^* S+\alpha_1 I & S^* S \\ S^* S & S^* S+\alpha_2 I
    \end{pmatrix},
\end{equation*}
where $I$ is the identity operator on $L^2(\Omega)$, see \cite{Kanzow2017a}. It follows that $F$ is strongly monotone and, since $g$ is linear, the problem automatically satisfies SOSC and therefore admits a local error bound by Theorem \ref{Thm:ErrorBound}. Moreover, it is easy to see that the same holds for the discretized problems presented below.

We now present some numerical results for the example from \cite{Borzi2013}. The setting is again constructed in such a way that the optimal solution is known. In fact, the construction is very similar to the one from the previous section: let $\Omega:=(0,1)^2$ be the unit square and define $\alpha_i:=1$, $a_i:=-0.5$, and $b_i:=0.5$ for all $i$. Consider the functions
\begin{equation*}
    \bar{y}(x) :=\sin (\pi x_1)\sin (\pi x_2), \quad
\begin{aligned}
    \bar{p}_1(x) & :=-\sin (2\pi x_1)\sin (2\pi x_2), \\
    \bar{p}_2(x) & :=-\sin (3\pi x_1)\sin (3\pi x_2),
\end{aligned}
\end{equation*}
as well as $y_d^i:=\bar{y}+\Delta \bar{p}_i$, $\bar{u}_i:=P_{[a_i,b_i]}(-\bar{p}_i/\alpha_i)$ for all $i$, and finally $f:=-\Delta \bar{y}-\bar{u}_1-\bar{u}_2$. Then it is easy to see that $\bar{u}$ is a Nash equilibrium. The corresponding state is given by $\bar{y}$, the variables $\bar{p}_i$ are the adjoint states of the players, and the Lagrange multiplier is given by $\bar{\lambda}:=(-\bar{p}_1-\alpha_1\bar{u}_1,-\bar{p}_2-\alpha_2\bar{u}_2)$.

\begin{table}\centering
\begin{tabular}{|r|ccc|ccc|ccc|}\hline
    & \multicolumn{3}{|c|}{$n=64$} & \multicolumn{3}{|c|}{$n=256$} & \multicolumn{3}{|c|}{$n=1024$} \\
    $k$ & $\rho_k$ & $\sigma_k$ & $\operatorname{dist}_k$ &
    $\rho_k$ & $\sigma_k$ & $\operatorname{dist}_k$ &
    $\rho_k$ & $\sigma_k$ & $\operatorname{dist}_k$ \\ \hline
    
     0 &  1 & 5.08e-01 & 5.43e-01 &  1 & 5.02e-01 & 5.37e-01 &  1 & 5.01e-01 & 5.35e-01 \\
     1 &  1 & 8.59e-02 & 1.71e-01 &  1 & 8.47e-02 & 1.69e-01 &  1 & 8.44e-02 & 1.69e-01 \\
     2 &  1 & 4.30e-02 & 8.54e-02 &  1 & 4.23e-02 & 8.46e-02 &  1 & 4.22e-02 & 8.44e-02 \\
     3 & 10 & 2.15e-02 & 4.24e-02 & 10 & 2.12e-02 & 4.23e-02 & 10 & 2.11e-02 & 4.22e-02 \\
     4 & 10 & 1.95e-03 & 3.41e-03 & 10 & 1.92e-03 & 3.81e-03 & 10 & 1.92e-03 & 3.83e-03 \\
     5 & 10 & 1.78e-04 & 8.13e-04 & 10 & 1.75e-04 & 3.17e-04 & 10 & 1.74e-04 & 3.47e-04 \\
     6 & 10 & 1.61e-05 & 8.95e-04 & 10 & 1.59e-05 & 5.08e-05 & 10 & 1.59e-05 & 2.96e-05 \\
     7 & 10 & 1.47e-06 & 9.08e-04 & 10 & 1.45e-06 & 5.63e-05 & 10 & 1.44e-06 & 3.23e-06 \\
     8 & 10 & 1.33e-07 & 9.09e-04 & 10 & 1.31e-07 & 5.74e-05 & 10 & 1.31e-07 & 3.50e-06 \\
     9 & 10 & 1.21e-08 & 9.09e-04 & 10 & 1.20e-08 & 5.75e-05 & 10 & 1.19e-08 & 3.59e-06 \\
    10 & 10 & 1.10e-09 & 9.09e-04 & 10 & 1.09e-09 & 5.75e-05 & 10 & 1.08e-09 & 3.60e-06 \\

    \hline
\end{tabular}
\caption{Numerical results for the optimal control Nash equilibrium problem.}
\label{Tab:OptContNEP}
\end{table}

The implementation of the augmented Lagrangian method for the above problem is similar to that of the previous section. More precisely, we use the same set of parameters, the same termination criteria, and the same method for the solution of the subproblems. The corresponding numerical results are given in Table \ref{Tab:OptContNEP}, where each line contains the values of the penalty parameter $\rho_k$, the optimality measure $\sigma_k$, and the distance $\operatorname{dist}_k$ of $(u^k,\lambda^k)$ to $(\bar{u},\bar{\lambda})$. We observe good consistency of the results with our established theory; in particular, the rate of convergence is roughly proportional to $1/\rho_k$. We also highlight once again that the distances $\operatorname{dist}_k$ do not converge to zero because of the inexactness induced by the discretization.

We close this section by noting that, as explained in Remark \ref{Rem:BoxConstraints} for the standard (single-objective) optimal control problem, it is very important that we define $g$ and $K$ precisely as we did in order to ensure the fulfillment of the strict Robinson condition.

\subsection{Parameter Estimation in Elliptic Systems}\label{Sec:ApplicPara}

This example is based on the theory in \cite{Ito1990b,Ito1991}. For the sake of simplicity, we restrict ourselves to the one-dimensional case. Let $\Omega\subseteq \R$ be a bounded interval and consider the elliptic differential equation
\begin{equation}\label{Eq:ParaEstPDE}
    -\nabla (q \nabla u )=f, \quad u\in H_0^1(\Omega),
\end{equation}
where $q\in H^1(\Omega)$ and $f\in H^{-1}(\Omega)$. The parameter estimation problem now consists of the minimization of the tracking-type functional
\begin{equation}\label{Eq:ParaEstOpt}
    J(q,u):= \frac{1}{2}\|u-z\|_{H_0^1(\Omega)}^2+\frac{\beta}{2}\|q\|_{H^1(\Omega)}^2
\end{equation}
subject to \eqref{Eq:ParaEstPDE} and $q\ge \alpha$, where $z\in H_0^1(\Omega)$ and $\alpha,\beta>0$. To formulate this problem in our variational framework, let $X:=H:=H^1(\Omega)\times H_0^1(\Omega)$, $F:=( D_q J, D_u J )$, and
\begin{equation*}
    g(q,u):=
    \begin{pmatrix}
        q-\alpha \\ -\Delta^{-1}\bigl( \nabla (q \nabla u)+f \bigr)
    \end{pmatrix}
    , \quad K:=H_+^1(\Omega) \times \{0\},
\end{equation*}
where $H_+^1(\Omega)$ is the nonnegative cone in $H^1(\Omega)$. Note that the second component of $g$ is essentially the differential equation \eqref{Eq:ParaEstPDE}, but premultiplied with $-\Delta^{-1}$ to map the result back into $H_0^1(\Omega)$.

\begin{figure}\centering
\includegraphics[scale=0.5]{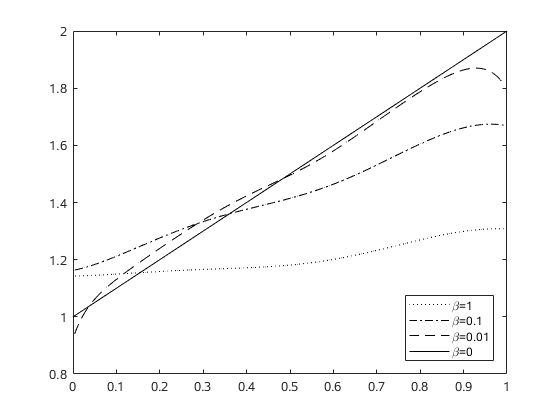}
\includegraphics[scale=0.5]{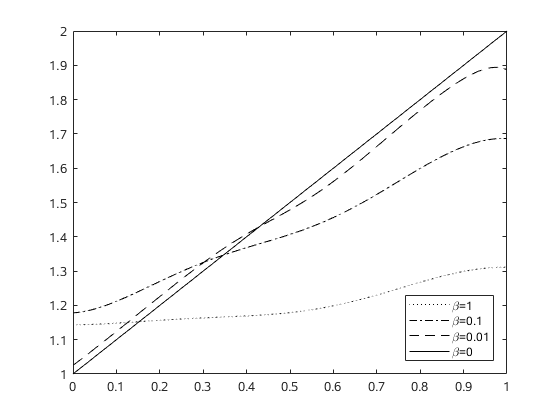}
\caption{Computed solutions $q$ of the parameter estimation problem for $n=256$ (left) and $n=1024$ (right).}
\label{Fig:ParaEst}
\end{figure}

The existence of solutions to \eqref{Eq:ParaEstOpt} can be shown by eliminating $u$ in \eqref{Eq:ParaEstPDE} and using the coercivity of $J$, see \cite{Ito1990b}. Let $(\bar{q},\bar{u})$ be a solution of the problem. Then
\begin{equation*}
    g'(\bar{q},\bar{u})=
    \begin{pmatrix}
        \operatorname{id}_{H^1} & 0 \\
        T_{\bar{u}} & T_{\bar{q}}
    \end{pmatrix},
\end{equation*}
where $T_{\bar{u}}(q):=-\Delta^{-1}( \nabla (q\nabla \bar{u}) )$ and $T_{\bar{q}}(u):=-\Delta^{-1}(\nabla (\bar{q}\nabla u))$. Observe now that $T_{\bar{q}}:H_0^1(\Omega)\to H_0^1(\Omega)$ is surjective. This follows from the fact that $\Delta:H_0^1(\Omega)\to H^{-1}(\Omega)$ is an isomorphism and that $u\mapsto \nabla (\bar{q}\nabla u)$ is surjective onto $H^{-1}(\Omega)$ by the Lax--Milgram theorem (since $\bar{q}\ge \alpha>0$), see \cite{Troeltzsch2010}. It therefore follows that the whole operator $g'(\bar{q},\bar{u})$ is surjective, and thus the strict Robinson condition is satisfied in $(\bar{q},\bar{u})$.

Let us furthermore assume that the second-order sufficient condition holds in $(\bar{q},\bar{u})$. The precise verification of this condition would require the knowledge of the solution, but the second-order condition is very plausible since the objective in \eqref{Eq:ParaEstOpt} is strongly convex (by virtue of the $H^1$-regularization term). Under the present assumptions, the problem admits the local error bound from Theorem~\ref{Thm:ErrorBound}. The corresponding residual mapping $\sigma:X\times H\to \R$ takes on the form
\begin{equation*}
    \sigma(q,u,\mu,\lambda):=\|F(q,u)+g'(q,u)^* (\mu,\lambda)\|_{X^*}+
    \|g(q,u)-P_K (g(q,u)+(\mu,\lambda)) \|_H,
\end{equation*}
where $(\mu,\lambda)\in H=H^1(\Omega)\times H_0^1(\Omega)$ is the pair of Lagrange multipliers. The vector $\mu$ corresponds to the lower bound constraint $q\ge \alpha$ (the first component of $g$), whereas $\lambda$ belongs to the partial differential equation (the second component of $g$).

\begin{table}\centering
\begin{tabular}{|r|cc|cc|cc|cc|}\hline
    & \multicolumn{2}{|c|}{$n=256,\,\beta=1$} & \multicolumn{2}{|c|}{$n=256,\,\beta=0.01$}
    & \multicolumn{2}{|c|}{$n=1024,\,\beta=1$} & \multicolumn{2}{|c|}{$n=1024,\,\beta=0.01$} \\
    $k$ & $\rho_k$ & $\sigma_k$ & $\rho_k$ & $\sigma_k$ & $\rho_k$ & $\sigma_k$
    & $\rho_k$ & $\sigma_k$ \\ \hline
    
     0 &  1 & 2.54e+04 &  1 & 2.54e+04 & 1 & 2.05e+05 & 1 & 2.05e+05 \\
     1 &  1 & 4.64e-01 &  1 & 1.21e-01 & 1 & 4.44e-01 & 1 & 6.59e-02 \\
     2 &  1 & 7.48e-02 &  1 & 5.01e-02 & 1 & 5.83e-02 & 1 & 2.52e-02 \\
     3 &  1 & 2.75e-02 &  1 & 2.50e-02 & 1 & 1.94e-02 & 1 & 1.17e-02 \\
     4 &  1 & 1.12e-02 & 10 & 1.30e-02 & 1 & 7.54e-03 & 1 & 5.64e-03 \\
     5 &  1 & 4.62e-03 & 10 & 1.30e-03 & 1 & 3.00e-03 & 1 & 2.73e-03 \\
     6 &  1 & 1.93e-03 & 10 & 1.30e-04 & 1 & 1.21e-03 & 1 & 1.33e-03 \\
     7 &  1 & 8.16e-04 & 10 & 1.31e-05 & 1 & 4.97e-04 & 1 & 6.46e-04 \\
     8 &  1 & 3.46e-04 &    &          & 1 & 4.80e-03 & 1 & 3.14e-04 \\
     9 &  1 & 1.48e-04 &    &          & 1 & 8.67e-05 & 1 & 1.53e-04 \\
    10 &  1 & 6.34e-05 &    &          &   &          & 1 & 7.48e-05 \\
    
    \hline
\end{tabular}
\caption{Iteration histories for the parameter estimation problem.}
\label{Tab:ParaEst}
\end{table}

We now present some numerical results. For practical purposes, we slightly alter the penalization scheme from Algorithm~\ref{Alg:ALM} in the sense that we augment the equality constraint only and leave the inequality constraint $q\ge \alpha$ unchanged. This has the benefit that we avoid the computation of projections and distance functions involving $H_+^1(\Omega)$. The resulting modifications to Algorithm~\ref{Alg:ALM} are fairly straightforward (see, for instance, \cite{Birgin2012,Birgin2014,Kanzow2017a}). Indeed, the augmented subproblems are now (constrained) variational inequalities over the set $\{q\in H^1(\Omega): q\ge \alpha \}$. Moreover, in the updating scheme of the penalty parameter, we have to take into account the multiplier corresponding to the lower inequality constraint, which has to be recovered from the solution process of the corresponding constrained subproblem.

The example we present is \cite[Ex.~6]{Ito1991}. The domain $\Omega:=(0,1)$ is discretized by means of $n\in\N$ points, including boundary points, and the derivative operators are approximated by forward differences. The problem is constructed by setting
\begin{equation*}
    q_0(x):=1+x, \quad z(x):=u_0(x):=\sin (\pi x), \quad
    f(x):=(1+x)\pi^2 \sin (\pi x)-\pi \cos (\pi x),
\end{equation*}
so that $-\nabla (q_0 \nabla u_0)=f$. Since $z=u_0$, an exact solution of \eqref{Eq:ParaEstOpt} for $\beta=0$ is simply given by $(q_0,u_0)$. For $\beta>0$, which is the preferable case from a numerical perspective, the solutions are different in general.

The implementation of the algorithm was done in MATLAB\textsuperscript{\textregistered} and uses the parameters
\begin{equation*}
    (q^0,u^0,\mu^0,\lambda^0):=(1,0,0,0), \quad \alpha:=0.1, \quad
    \rho_0:=1, \quad \gamma:=10, \quad \tau:=0.5,
\end{equation*}
together with $\BddMul^k:=P_B(\lambda^k)$ and $B$ the closed ball with radius $10^6$ around zero in $H_0^1(\Omega)$. The termination criteria for the outer and inner iterations are $\sigma(q,u,\mu,\lambda)\le 10^{-4}$ and $\|\Lag_{\rho_k}(q,u,\BddMul^k)+\mu^k\|_{X^*}\le 10^{-6}$, respectively, where $\mu^k$ is the Lagrange multiplier corresponding to the constraint $q\ge \alpha$. Finally, the augmented subproblems were solved by the \texttt{fmincon} routine which takes into account the lower box constraint.

Table~\ref{Tab:ParaEst} contains the corresponding iteration numbers for different values of $n$ and $\beta$. We again observe linear convergence of the optimality measures $\sigma_k$, and the sequences of penalty parameters remain bounded. The only exception is the eighth iteration for $n=1024$ and $\beta=1$, which may be due to the subproblem routine \texttt{fmincon} failing to find a sufficiently exact minimizer. Finally, Figure~\ref{Fig:ParaEst} compares the computed solutions $q$ for different $n$ and $\beta$ to the exact solution $q_0$ for $\beta=0$.

\section{Final Remarks}\label{Sec:Final}

We have presented a method of augmented Lagrangian type for the solution of variational problems in Banach spaces. In particular, we have shown global and local convergence of the algorithm under suitable assumptions.

The assumptions needed for the local convergence results include, in particular, a local error bound for the distance of a pair $(x,\lambda)$ to a KKT point $(\bar{x},\bar{\lambda})$. This property has played a central role in our analysis and is a consequence of the second-order sufficient condition together with a strict version of the Robinson constraint qualification.

The above results suggest that error bounds are the natural framework for the local convergence analysis of augmented Lagrangian methods. We therefore hope that the results in this paper will find applications in other areas of optimization. In particular, an interesting idea would be to specialize some of the assumptions and results for problem classes such as optimal control or semidefinite programming. Another aspect which could lead to further developments is the concept of \emph{partial penalization} which arises when additional constraints are present in the problem formulation which are not penalized, see \cite{Andreani2007,Birgin2012,Birgin2014} and the example in Section~\ref{Sec:ApplicPara}.

\bibliographystyle{abbrv}
\bibliography{VI_ALMinf}

\end{document}